\newtheorem{theorem}{Theorem}
\newtheorem{proposition}{Proposition}
\newtheorem{remark}{Remark}
\newtheorem{definition}{Definition}
\newtheorem{corollary}{Corollary}
\newtheorem{lemma}{Lemma}
\newcommand{\Ad}{\operatorname{Ad}}
\newcommand{\sgn}{\operatorname{sgn}}
\newcommand{\const}{\operatorname{const}}
\newcommand{\GL}{\operatorname{GL}}
\newcommand{\ad}{\operatorname{ad}}
\newcommand{\arch}{\operatorname{arch}}
\newcommand{\sh}{\operatorname{sh}}
\newcommand{\ch}{\operatorname{ch}}
\begin{document}
	
\title[PMP, (co)adjoint representation, geodesics]{Pontryagin maximum principle, (co)adjoint representation, and normal geodesics of left-invariant (sub-)Finsler metrics on Lie groups}
\author{V.~N.~Berestovskii, I.~A.~Zubareva}
\address{Sobolev Institute of Mathematics, \newline
Russia, 630090, Novosibirsk, Acad. Koptyug avenue, 4;\newline
Novosibirsk State University,\newline
Russia, 630090, Novosibirsk, Pirogova str., 1}
\email{vberestov@inbox.ru}
\address{Sobolev Institute of Mathematics,\newline
Russia, 644099, Omsk, Pevtsova str., 13}
\email{i\_gribanova@mail.ru}
\begin{abstract}
On the ground of origins of the theory of Lie groups and Lie algebras, their (co)adjoint representations, and the
Pontryagin maximum principle for the time-optimal problem  are given an independent foundation for methods of geodesic vector 
field  to search for normal geodesics of left-invariant (sub-)Finsler metrics on Lie groups and to look for the corresponding locally 
optimal controls in (sub-)\\Riemannian case, as well as some their applications.

\vspace{2mm}
\noindent {\it Mathematics Subject Classification (2010):} 53C17, 53C22, 53C60, 49J15. 

\vspace{2mm}
\noindent {\it Keywords:} (co)adjoint representation, left-invariant (sub-)Finsler metric, left-invariant (sub-)Riemannian metric,
Lie algebra, Lie group, mathematical pendulum, normal geodesic, optimal control.
\end{abstract}
\maketitle

\section*{Introduction}

An extensive geometric research subject  is the class of homogeneous Riemannian manifolds which includes  Lie groups with left-invariant 
Riemannian metrics \cite{BerNik12} and is a part of the  class of homogeneous Finsler manifolds \cite{Deng12}. Every homogeneous 
Riemannian manifold is the image of some Lie group with a left-invariant Riemannian metric relative to a Riemannian submersion.

After  Gromov's 1980s papers,  homogeneous sub-Finsler manifolds, in particular, sub-Riemannian manifolds
were actively studied \cite{BR96}--- \cite{AS}. Their investigation is based on the Rashevsky--Chow theorem which states that 
any two points  of a connected manifold can be joined by a piecewise smooth curve tangent to a given totally
nonholonomic distribution \cite{R38}, \cite{C39}. Аn independent proof of some its version for Lie groups with left-invariant 
sub-Finsler metrics is given in Theorem \ref{contr}.

All homogeneous (sub-)Finsler manifolds are contained in the class of locally compact homogeneous spaces with intrinsic metric.
This class is  a complete metric space with respect to the Busemann-Gromov-Hausdorff metric introduced in \cite{BerGorb14}. 
Its everywhere dense subset is the class of Lie groups with left-invariant Finsler metrics. In addition,

1) each homogeneous locally compact space $M$ with intrinsicr metric is the limit of some sequence of homogeneous manifolds $M_n$  
with intrinsic metrics, bonded by submetries \cite{Ber88}, \cite{Ber891}, \cite{Ber87}, \cite{BerGu00};

2) every homogeneous manifold with intrinsic metric is the quotient  space 
$G/H$ of some connected Lie group $G$ by its compact subgroup $H,$ equipped with
$G$-invariant Finsler or sub-Finsler metric $d$; in particular, it
may be Riemannian or sub-Riemannian metric \cite{Ber88}, \cite{Ber881}, \cite{Ber89}; 

3) moreover, according to a form of metric $d$, there exists a left-invariant Finsler, sub-Finsler, Riemannian or sub-Riemannian 
metric $\rho$ on $G$ such that  the canonical projection $(G,\rho)\rightarrow (G/H,d)$ is a submetry \cite{Ber89}.

The search for geodesics of homogeneous (sub)-Finsler manifolds are reduced to the case of 
Lie groups with left-invariant (sub)-Finsler metrics.

The shortest arcs on Lie groups with left-invariant (sub)-Finsler metrics are optimal
trajectories of the corresponding left-invariant time-optimal problem on Lie groups \cite{Ber88}.
This permits to apply the Pontryagin maximum principle (PMP) for their search \cite{PBGM}.
By this method, in \cite{Ber94} are found all geodesics and
shortest arcs of an arbitrary sub-Finsler metric on the three-dimensional Heisenberg group.

In \cite{Ber14} is proposed a search method of normal geodesics on Lie groups with  left-invariant sub-Riemannian metrics. 
The method is applicable to Lie groups with left-invariant Riemannian metrics, since all
their geodesics are normal.

In this paper, to find geodesics of left-invariant (sub-)Finsler metrics on
Lie groups and corresponding locally optimal controls in (sub-)Riemannian case we use  the geodesic vector field method 
(Theorems \ref{main},\ref{vf}) and an improved version of method from \cite{Ber14}, applying (co)adjoint representations.
The version is based on differential equations  from Theorem \ref{kok} for controls, using only the structure constants of Lie algebras
of Lie groups.

An interesting feature of these two methods in (sub-)Riemannian case  is that  geodesics vector fields on Lie groups (their integral 
curves are geodesics, i.e., locally optimal trajectories) and locally optimal controls on Lie algebras of these 
Lie groups can be determined independently of each other, although there is a connection between them. Moreover, controls on different Lie algebras
could be solutions of the same mathematical pendulum equation   (see sections \ref{so3}--\ref{se2}).

Analogues of Theorems \ref{hameq1} and \ref{main} (but for the last theorem is only along one geodesic) are proved in the book \cite{Jur97} 
on the basis of more complicated concepts and apparatus. Apparently, other researchers did not apply PMP {\it for the time-optimal problem}
to find geodesics of left-invariant metrics on Lie groups.

\section{Preliminaries}

A smooth manifold  $G$ which is a group with respect to an operation $\cdot$ is called the Lie group if the operations of multiplication and inversing 
are smooth maps. Smooth map of Lie groups that is a homomorphism is called a homomorphism of Lie groups. Monomorphisms, epimorphisms, 
and isomorphisms of Lie groups are defined in a similar way. A subgroup $H$ of a Lie group $G$ which is its smooth submanifold is called the Lie 
subgroup of the Lie group $G$. By E.Cartan's theorem, every closed subset $H$ of the Lie group $G$, which is its subgroup, is the Lie subgroup 
of the Lie group $G$ \cite{Ad}.

The concept of the virtual Lie subgroup of a Lie group generalizes the concept of the Lie subgroup of a Lie group.
A subgroup $H$ of a Lie group $G$ is called its virtual Lie subgroup, if $H$ admits the structure of the Lie group such that its  topology base
consists of connected components of open subsets of the induced topology and the inclusion map of $H$ in $G$ is an (injective) homomorphism of Lie groups.

The left and the right shifts $l_g: h\in G\rightarrow g\cdot h,$ $ r_g: h\in G\rightarrow h\cdot g,$ $g,h\in G,$
of the Lie group $(G,\cdot)$ by an element $g$ are diffeomorphisms with the inverse shifts  $l_{g^{-1}},$ 
$r_{g^{-1}},$ and their differentials $(dl_g)_h: T_hG\rightarrow T_{gh}G$ (respectively,  $(dr_g)_h: T_hG\rightarrow T_{hg}G)$ are linear 
isomorphisms of tangent vector spaces to $G$ at corresponding points.

A (smooth) vector field $V: G\rightarrow TG,\,\,V: g\in G\rightarrow T_gG$ on the Lie group $G$ such that $V\circ l_h = d(l_h)\circ V$ for all $h\in G,$ is called 
the left-invariant vector field on $G$. The right-invariant vector field on $G$ is defined in a similar way. 
Every left-invariant vector field on the Lie group $G$ has a form
\begin{equation}
\label{V}
V(g)=(dl_g)_e(v),\quad v\in T_eG,
\end{equation}
where $e$ is the unit of the group $G$.

A homomorphism of Lie groups 
$\phi: (\mathbb{R},+)\rightarrow (G,\cdot)$ is called the $1$--parameter subgroup of the Lie group
$(G,\cdot)$. \label{onep}
Every $1$--parameter subgroup $\phi(t), t\in \mathbb{R},$ of a Lie group $G$ is an integral curve of a left-invariant vector field $V$ on $G$ 
with formula (\ref{V}), where $v= (d\phi)_0(\overline{e}),$ and $\overline{e}\in T_0\mathbb{R}$ is the vector with the component 1.

For a vector $v\in T_eG,$ we denote by  $V_v$ and  $\phi_v$ respectively the left-invariant vector field $V$ on $G,$ 
defined by (\ref{V}), and the $1$--parameter subgroup $\phi=\phi(t),$ $t\in\mathbb{R},$ in $G$ with condition $(d\phi)_0(\overline{e})=v$.
The exponential map $\exp=\exp_G: T_eG\rightarrow G$ is defined by formula $v\in T_eG\rightarrow \phi_v(1).$
If $f: G\rightarrow H$ is a homomorphism of Lie groups then
\begin{equation}
\label{expc}
f\circ \exp_G= \exp_H\circ (df)_e.
\end{equation}
For each vector $v\in T_eG,$ we have $(d\exp)_{0}(v)=v,$ where $0$ is zero of the tangent vector space $T_eG.$
As a result, there exist open neighborhoods $U$ of zero in  $T_eG$ and $W$ of unit $e$ in $G$ such that $\exp: U\rightarrow V$ is a diffeomorphism.
If $\dim(G)=n$ then after introduction of arbitrary Cartesian coordinates $(x_1,\dots, x_n)$ with zero origin $0$ in the tangent vector space 
$T_eG,$ it is naturally identified with $\mathbb{R}^n.$ Then $\exp^{-1}: V\rightarrow U\subset \mathbb{R}^n$ is a local chart (a coordinate system) 
on $G$ in the neighborhood $V$ of the point $e\in G.$ This coordinate system in $V$ is called {\it a coordinate system of the first kind}.
A family of local charts 
$\exp^{-1}\circ l_{g^{-1}}: g\cdot V\rightarrow U\subset \mathbb{R}^n,$ $g\in G,$
sets a smooth structure on  $G,$ identical with the initial smooth structure of the Lie group.

The group $\GL(n)=\GL(n,\mathbb{R})$ of all nondegenerate real squared $(n\times n)$-matrices is a Lie group  
relative to the global map that associates to each matrix $g\in \GL(n)$ its elements $g_{ij},$ $i,j=1,\dots n.$

Obviously, for every $g\in G$ the mapping $I(g): G\rightarrow G$ such that 
$$I(g)(h)= g\cdot h\cdot g^{-1}=(l_g \circ r_{g^{-1}})(h)=  (r_{g^{-1}} \circ l_{g})(h)$$
is an automorphism of the Lie group $(G,\cdot),$ $I(g)(e)=e,$ and the differential
$$(dI(g))_e: = dl_g\circ dr_{g^{-1}}: T_eG\rightarrow T_eG$$ is a nondegenerate linear map
(i.e. an element of the Lie group $\GL(n)$ relative to
some vector basis in $T_eG$, if $\dim G=n$), denoted with $\Ad(g)$. 
The calculation rule for the  differential  of composition gives
$$\Ad(g_1\cdot g_2)= (dI(g_1\cdot g_2))_e= (d(I(g_1)\circ I(g_2)))_e=   (dI(g_1))_e\circ (dI(g_2))_e= \Ad(g_1)\circ \Ad(g_2),$$
i.e., $\Ad: G\rightarrow \GL(n)$ is a homomorphism of Lie groups, called the adjoint representation of the Lie group $G$. 
By formula (\ref{expc}),
\begin{equation} 
\label{igexp}
I(g)\circ \exp= \exp\circ \Ad(g), g\in G,
\end{equation}
the kernel of the homomorphism $\Ad$ for a connected Lie group $G$ is the center of the Lie group $G,$
\begin{equation}
\label{digexp}
\Ad\circ \exp_G= \exp_{\GL(n)}\circ (d\Ad)_e.
\end{equation}

Set $\mathfrak{g}:=T_eG$ for a Lie group $(G,\cdot),$  $\mathfrak{gl}(n):= T_E\GL(n)= M(n)$ 
for the Lie group $\GL(n),$ where $M(n)$ is the vector space of all real $(n\times n)$-matrices, 
$\ad =\ad_{\mathfrak{g}}:= (d\Ad)_e;$  $L(X,Y)$ is the (real) vector space of linear maps from the real vector space  $X$ to 
the real vector space $Y$; $B(X\times Y, Z)$ is the vector space of bilinear maps from $X\times Y$ to $Z$. It is clear that
$$\ad \in L(\mathfrak{g}, L(\mathfrak{g},\mathfrak{g}))=B(\mathfrak{g}\times \mathfrak{g},\mathfrak{g}).$$
{\it A vector $[v,w]: =\ad(v)(w)\in \mathfrak{g},$ $v,w\in \mathfrak{g}$, is called the 
Lie bracket of vectors $v,w\in \mathfrak{g}.$ The pair $(\mathfrak{g},[\cdot,\cdot])$ is called the Lie algebra of the Lie group $(G,\cdot)$}.  
The definition implies that the Lie bracket operation is bilinear. It is clear that
$$\frac{\partial}{\partial s}[\exp(tv)\exp(sw)\exp(-tv)](0)=\Ad(\exp(tv))(w),$$
\begin{equation}
\label{bra}
[v,w]= \frac{\partial}{\partial t}\left(\frac{\partial}{\partial s}[\exp(tv)\exp(sw)\exp(-tv)](0)\right)(0),
\end{equation}

The formula (\ref{bra}) and the bilinearity of the Lie bracket imply the skew symmetry of the Lie bracket and the triviality of the Lie algebra 
of any commutative Lie group; for a connected Lie group the converse statement is also true. It follows from formulae  (\ref{expc}), 
(\ref{bra}) that if $f: G\rightarrow H$ is a homomorphism of Lie groups and  $(\mathfrak{h},[\cdot,\cdot])$ is the Lie algebra of the 
Lie group $H$, then for any elements $v,w\in \mathfrak{g},$  
$$(df)_e([v,w])=[(df)_e(v),(df)_e(w)].$$ 
In other words, the differential $(df)_e: \mathfrak{g}\rightarrow \mathfrak{h}$ is a homomorphism of Lie algebras 
$(\mathfrak{g},[\cdot,\cdot])$ and $(\mathfrak{h},[\cdot,\cdot])$ of Lie groups $G$ and $H.$  As a corollary, Lie algebras of locally 
isomorphic Lie groups are isomorphic (the converse statement is also true) and
\begin{equation}
\label{adgi}
\Ad(g)([v,w])= [\Ad(g)(v),\Ad(g)(w)],\quad g\in G,\quad v,w\in \mathfrak{g}.
\end{equation}
The substitution $g=\exp(tu),$ $u\in \mathfrak{g},$ to this formula and the differentiation by $t$ at $t=0$ gives 
the following formula
\begin{equation}
\label{difalg}
[u,[v,w]]= [[u,v],w]+ [v,[u,w]],\quad u,v,w\in (\mathfrak{g},[\cdot,\cdot]),
\end{equation}
which is equivalent by the skew symmetry of the Lie bracket to {\it the Jacobi identity} 
\begin{equation}
\label{jak}
[ u,[v,w]]+ [v,[w,u]]+ [w,[u,v]]=0. 
\end{equation}

It is well-known that
\begin{equation}
\label{expgl}
\exp_{\GL(n)}(A)= \exp A = \sum_{k=0}^{\infty}\frac{A^k}{k!},\quad A\in\mathfrak{gl}(n),
\end{equation}
which together with  (\ref{bra}) imply
\begin{equation}
\label{slgl}
[A,B]= AB - BA,\quad A,B\in (\mathfrak{gl}(n),[\cdot,\cdot]).
\end{equation}

\section{Theoretic results}

\begin{definition}
	\label{gen}
	Let $(\mathfrak{l},[\cdot,\cdot])$ be a Lie algebra; $\mathfrak{p}, \mathfrak{q}\subset \mathfrak{l}$ are nonzero vector subspaces. By definition, 
	$$[\mathfrak{p},\mathfrak{q}]= \{[v,w]: v\in \mathfrak{p}, w\in \mathfrak{q}\}.$$
	If $\dim(\mathfrak{p})\geq 2$ then by definition,
	$$\quad\mathfrak{p}^1=\mathfrak{p},\quad \mathfrak{p}^{k+1}=[\mathfrak{p},\mathfrak{p}^k],\quad \mathfrak{p}_m= \sum_{k=0}^{m}\mathfrak{p}^k.$$ 
	The vector subspace $\mathfrak{p}\subset \mathfrak{l}$ generates the Lie algebra  $(\mathfrak{l},[\cdot,\cdot])$, if 	$\mathfrak{l}=\mathfrak{p}_m$ for some natural number $m;$ the smallest number $m:=s$ with such property is called the generation degree 
	(of the algebra $(\mathfrak{l},[\cdot,\cdot])$ by the subspace $\mathfrak{p}$).  
\end{definition}
It is clear that subsets from Definition \ref{gen} are vector subspaces of $\mathfrak{l}.$ 
\begin{definition}
	\label{adapt}
	Let us assume that the vector subspace $\mathfrak{p}\subset \mathfrak{l}$ generates the Lie algebra  $(\mathfrak{l},[\cdot,\cdot]),$
	$2\leq \dim(\mathfrak{p})< \dim(\mathfrak{l}),$ $s$ is the generation degree, $r_m,$ $m=1,\dots, s,$ are dimensions (ranks) of  the 
	spaces $\mathfrak{p}_m.$ Thus $2\leq r_1< r_2<\dots < r_s,$ $r_1=\dim(\mathfrak{p})=r,$ $r_s=\dim(\mathfrak{l})=n$. 
	A basis $\{e_1,\dots, e_{r_s}\}$ of the Lie algebra $\mathfrak{l}$ is called adapted to the subspace
	$\mathfrak{p},$ if $\{e_1,\dots, e_{r_m}\}$ is a basis of the subspace $\mathfrak{p}_m$ for every  $m=1,\dots, s$.    
\end{definition}

Let $\{e_1,\dots,e_r\}$ be any basis of the vector subspace $\mathfrak{p}\subset \mathfrak{g},$ generating the Lie algebra  
$(\mathfrak{g},[\cdot,\cdot])$ of a Lie group $(G,\cdot).$

\begin{theorem}
	\label{contr}
	Let $(G,\cdot)$ be a connected Lie group and a vector subspace $\mathfrak{p}\subset \mathfrak{g}$ generates Lie algebra 
	$(\mathfrak{g},[\cdot,\cdot]).$ Then the  control system 
	\begin{equation}
	\label{dyn}
	\dot{g}=(dl_g)(u),\quad u\in \mathfrak{p},
	\end{equation}
	is controllable (attainable) by means of piecewise constant controls
	\begin{equation}
	\label{co}
	u=u(t)\in \mathfrak{p},\quad 0\leq t\leq T,
	\end{equation}
	where $u(t)=\pm e_j,$ $j=1,\dots, r,$ in the constancy segments of the control.
	In other words, for any elements $g_0,g_1\in G$ there exists  a piecewise constant control  (\ref{co})  of this type such that $g(T)=g_1$ 
	for solution of the Cauchy problem
	$$\dot{g}(t)=dl_{g(t)}(u(t)), \quad g(0)=g_0.$$ 
\end{theorem}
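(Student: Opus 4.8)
The plan is to describe the reachable set from the identity and show it is all of $G$. First I would record the solution formula for a constant control: for $u(t)\equiv v$ on $[0,\tau]$ the solution of $\dot g = dl_g(v)$ with $g(0)=g_0$ is $g(\tau)=g_0\cdot\exp(\tau v)$, since $t\mapsto g_0\exp(tv)$ has velocity $dl_{g_0}(dl_{\exp(tv)}(v))=dl_{g(t)}(v)$ by (\ref{V}) and the fact that $\phi_v(t)=\exp(tv)$ is the integral curve of $V_v$ through $e$. Concatenating piecewise constant controls with values in $\{\pm e_1,\dots,\pm e_r\}$ therefore yields exactly the products $g_0\cdot\exp(t_1v_1)\cdots\exp(t_kv_k)$ with $t_i\ge 0$ and $v_i\in\{\pm e_j\}$. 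By left-invariance of the system it suffices to take $g_0=e$ and prove that the reachable set $\mathcal{A}$ from $e$ equals $G$; an admissible path from $e$ to $g_0^{-1}g_1$ is then left-translated by $g_0$ to one joining $g_0$ to $g_1$.

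Since both $+e_j$ and $-e_j$ are admissible, one reaches $\exp(se_j)$ for every real $s$ (apply $\sgn(s)\,e_j$ for time $|s|$), and the reverse-order product of negated exponentials shows that $\mathcal{A}$ is closed under inversion; hence $\mathcal{A}$ is precisely the subgroup of $G$ generated by the one-parameter subgroups $\{\exp(\mathbb{R}e_j)\}_{j=1}^{r}$. The proof then reduces to two facts: (a) $\mathcal{A}$ contains a neighborhood of $e$; and (b) an open subgroup of a connected group is the whole group. Fact (b) is immediate, as the cosets of $\mathcal{A}$ are open and partition $G$, so connectedness leaves a single coset.

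The heart of the matter, and the main obstacle, is (a), and this is exactly where the generating hypothesis on $\mathfrak{p}$ enters. Commutators of the flows remain inside $\mathcal{A}$, and by the standard commutator expansion $\exp(tv)\exp(tw)\exp(-tv)\exp(-tw)=\exp(t^2[v,w]+O(t^3))$ one can realize bracket directions. Concretely, I would fix a basis $e_1,\dots,e_n$ of $\mathfrak{g}$ adapted to $\mathfrak{p}$ (so that $e_{r+1},\dots,e_n$ are iterated Lie brackets of $e_1,\dots,e_r$, which exist because $\mathfrak{p}$ generates $\mathfrak{g}$), and attach to each $e_i$ a curve $\gamma_i$ lying in $\mathcal{A}$ with $\gamma_i(0)=e$ and $\dot\gamma_i(0)=e_i$: for $i\le r$ take $\gamma_i(s)=\exp(se_i)$, while for $i>r$ take a commutator curve of the appropriate bracket depth, reparametrized by a suitable root of the parameter and pieced together at $0$ so as to be $C^1$ with velocity equal to the prescribed iterated bracket. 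The genuinely technical step, which I would carry out by induction on bracket depth, is the construction of these $C^1$ curves and the verification of their velocity at $0$.

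Granting the curves $\gamma_i$, I would consider the map $\Phi(s_1,\dots,s_n)=\gamma_1(s_1)\cdots\gamma_n(s_n)$, which sends a neighborhood of $0\in\mathbb{R}^n$ into $\mathcal{A}$. Because the differential of group multiplication at $(e,\dots,e)$ is addition on $\mathfrak{g}=T_eG$, one gets $d\Phi_0(\partial_{s_i})=\dot\gamma_i(0)=e_i$, so $d\Phi_0$ is a linear isomorphism of $\mathbb{R}^n$ onto $\mathfrak{g}$. The ($C^1$) inverse function theorem then shows that $\Phi$ maps a neighborhood of $0$ onto a neighborhood of $e$, which establishes (a); combined with (b) this gives $\mathcal{A}=G$ and completes the proof.
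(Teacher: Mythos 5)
Your proposal is sound in outline and ends the same way the paper does (a product map $\Phi$ with invertible differential at the origin, the inverse function theorem, then a connectedness argument), but it takes a genuinely different route at the crucial step: how bracket directions are realized by admissible motions. You use the classical Chow--Rashevsky device, i.e.\ group-commutator curves $\exp(tv)\exp(tw)\exp(-tv)\exp(-tw)=\exp\bigl(t^2[v,w]+O(t^3)\bigr)$ reparametrized by roots of the parameter and glued at $0$, which realizes iterated brackets only \emph{asymptotically}, forces you down to $C^1$ regularity, and places the whole weight of the argument on the induction on bracket depth that you name but defer. The paper sidesteps exactly this analysis with an algebraic trick: each adapted-basis vector $e_j$, an iterated bracket as in (\ref{vb}), is replaced by the perturbed vector $e'_j$ of (\ref{nvb}); the first-order expansion of $\exp(t\ad)$ shows these still form a basis for small nonzero $t_k$, while the naturality relation (\ref{igexp}) gives the \emph{exact} identity
$$\exp(se'_j)=\exp(t_me_{i_m(j)})\cdots\exp(t_2e_{i_2(j)})\exp(se_{i_1(j)})\exp(-t_2e_{i_2(j)})\cdots\exp(-t_me_{i_m(j)}),$$
a finite product of admissible exponentials. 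So the paper's curves $s\mapsto\exp(se'_j)$ are smooth, lie exactly in the reachable set, and directly exhibit the piecewise constant controls $u(t)=\pm e_j$ demanded by the statement, with no asymptotic expansion and no loss of regularity. On the other side, your observation that the reachable set $\mathcal{A}$ is the subgroup generated by the $\exp(\mathbb{R}e_j)$ (using the symmetric control set), followed by the open-subgroup argument, is a slightly cleaner finish than the paper's semigroup $W=\cup_k V^k$ together with its hands-on proof of closedness; both are correct. The one caveat is that, as written, your proof is incomplete at its self-declared heart: to make it a proof one must actually carry out the inductive construction of the $C^1$ commutator curves and verify their velocities at $0$ (standard, but genuinely technical), and that is precisely the burden the paper's conjugation trick eliminates.
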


\begin{proof}
	We shall apply the notation from Definitions \ref{gen} and \ref{adapt}. 
	
	Let usl construct an adapted basis $\{e_1,\dots, e_n\}$ to the subspace $\mathfrak{p}$  of the Lie algebra
	$(\mathfrak{g},[\cdot,\cdot])$ by induction on $m=1,\dots, s.$ 
	
	$m=1.$    First $r$ vectors of the basis coincide with vectors of basis for the space 
	$\mathfrak{p}^1=\mathfrak{p}$ chosen before Theorem \ref{contr}.

	$m=2.$  It is clear that we can take some vectors of a form $e_j=[e_{i_j},e_{k_j}]\in \mathfrak{p}^2,$ $j=r+1,\dots, r_2,$ where $i_j,$ $k_j$ 
	are some of numbers $1,\dots,r.$
	
	Let us assume that vectors  $e_1,\dots, e_{r_m}$ are constructed, where $2\leq m<s.$ Then we can take some vectors of a form
	$e_j=[e_{i_j},e_{k_j}]\in \mathfrak{p}^{m+1},$ $j=r_m+1,\dots, r_{m+1},$ where
	$i_j$ (respectively, $k_j$) are some of numbers $1,\dots, r$ (respectively, $r_{m-1}+1,\dots, r_m$).
	
	As a result, each vector $e_j,$ where $r_{m-1}< j\leq r_m,$ $m=2,\dots s,$ has a form
	\begin{equation}
	\label{vb}
	e_j=[e_{i_m(j)},[\dots, [e_{i_2(j)},e_{i_1(j)}]\dots ]],\quad 1\leq i_l(j)\leq r,\quad l=1,\dots, m.
	\end{equation}
	
	We claim that if every such vector $e_j$ is replaced by a vector  $e'_j$ of a form
	\begin{equation}
	\label{nvb}
	e'_j= (Ad(\exp(t_m e_{i_m(j)})\circ \dots \circ Ad(\exp(t_2 e_{i_2(j)})))(e_{i_1(j)})
	\end{equation}
	with sufficiently small nonzero numbers $t_2,\dots, t_m$ (preserving vectors $e_1,\dots, e_r$), then  we get again some basis in 
	$\mathfrak{g}$ (not necessarily adapted to the subspace $\mathfrak{p}$). 
	
	Indeed, on the basis of formulae (\ref{nvb}), (\ref{digexp}),
	$$e'_j= (\exp(t_m\ad(e_{i_m(j)}))\circ \dots \circ \exp(t_2\ad(e_{i_2(j)})))(e_{i_1(j)})= $$
	$$((E+t_m\ad(e_{i_m(j)})+O(t_m^2))\circ\dots \circ (E+t_2\ad(e_{i_2(j)})+O(t_m^2)))(e_{i_1(j)})=$$
	$$e_{i_1(j)}+ t_2[e_{i_2(j)},e_{i_1(j)}]+ \dots +(t_m\dots t_2)[e_{i_m(j)},[\dots,[e_{i_2(j)},e_{i_1(j)}]\dots]]+\sum_{k=2}^m o(t_k).$$
	We see from here and (\ref{vb}) that removing the last sum, we get a vector from
    $\mathfrak{p}_m$ that is equal to the vector $(t_m\dots t_2)e_j$ up to the module of the subspace $\mathfrak{p}_{m-1}$.
    This implies the statement from the previous paragraph.
	
	For simplicity, later on each such vector $e'_j$ is denoted by $e_j.$ 
	
	On the groud of formulae (\ref{nvb}) and (\ref{igexp}), 
	\begin{equation}
	\label{ige}
	\exp{(se_j)}=(I(\exp(t_m e_{i_m(j)}))\circ\dots\circ I(\exp(t_2 e_{i_2(j)})))(se_{i_1(j)}),\quad s\in \mathbb{R}.
	\end{equation}  
	
	Let us show that the statement of Theorem \ref{contr} is true for elements $g_0=e$ and $g_1=\exp(se_j).$  
	For this, we apply a control
	$$u=u(\tau),\quad 0\leq \tau\leq |s|+ 2\sum_{k=2}^{m}|t_k|,$$ 
    where 
	$$u(\tau)=\sgn(t_l) e_{i_l(j)}, \quad \sum_{k=l}^m|t_k|-|t_l|\leq \tau\leq \sum_{k=l}^m|t_k|,\quad l=2,\dots m,$$
	$$u(\tau)=\sgn(s) e_{i_1(j)}, \quad \sum_{k=2}^m|t_k|\leq \tau\leq \sum_{k=2}^m|t_k|+ |s|,$$
	$$u(\tau)=-\sgn(t_l) e_{i_l(j)},\quad  \sum_{k=2}^m|t_k|+ |s|+ \sum_{k=2}^l|t_k|-|t_l|\leq \tau\leq \sum_{k=2}^m|t_k|+ |s|+ \sum_{k=2}^l|t_k|,$$ 
	where $l=2,\dots, m.$
	Then it follows from the definition of $I(g),$ $g\in G,$ and the equation (\ref{ige}) that solution of the  Cauchy problem 
	for the system (\ref{dyn}) with $g(0)=e$ and with given control
	$u=u(\tau)$ is a piecewise smooth curve
	$$g(\tau)=\exp\left(\left(\tau- \sum_{k=l}^m|t_k|+|t_l|\right)\sgn(t_l) e_{i_l(j)}\right), \quad \sum_{k=l}^m|t_k|-|t_l|\leq \tau\leq \sum_{k=l}^m|t_k|;$$
	$$g(\tau)=\exp\left(\left(\tau - \sum_{k=2}^m|t_k|\right)\sgn(s) e_{i_1(j)}\right), \quad \sum_{k=2}^m|t_k|\leq \tau\leq \sum_{k=2}^m|t_k|+ |s|;$$
	$$g(\tau)=\exp\left(-\left(\tau - \left(\sum_{k=2}^m|t_k|+ |s|+ \sum_{k=2}^l|t_k|-|t_l|\right)\right)\sgn(t_l) e_{i_l(j)}\right), $$ 
	$$\sum_{k=2}^m|t_k|+ |s|+ \sum_{k=2}^l|t_k|-|t_l|\leq \tau\leq \sum_{k=2}^m|t_k|+ |s|+ \sum_{k=2}^l|t_k|,$$
	where $ l=2,\dots, m.$ In addition, $g\left(|s|+ 2\sum_{k=2}^{m}|t_k|\right)=\exp(se_j).$

	It follows from proved assertions that  for any collection $(s_1,\dots, s_n)\in \mathbb{R}^n$ the statement of Theorem \ref{contr} holds for elements
	$$g_0=e,\quad g_1=\Phi(s_1,\dots, s_n):= \exp(s_1e_1)\dots \exp(s_ne_n).$$ 
	In addition,
	$$\frac{\partial \Phi}{\partial s_i}(0,\dots, 0)=e_i,\quad t=1,\dots, n.$$
	Then on the ground of the inverse mapping theorem the map $\Phi$ is a diffeomorphism of
	some open neighborhood $W$ of zero $(0,\dots,0)$ in $\mathbb{R}^n$ onto some open neighborhood $V$ 
	of the unit $e$ in $G.$
		
	It follows from previously proved assertions that the statement of Theorem \ref{contr} holds for $g_0=e$ and any element $g_1\in V^k$, where  $k$ is 
	arbitrary natural number, hence for any element $g_1\in W:=\cup_{k=1}^{\infty}V^k.$ 
	This set is nonempty, open and closed in $G.$ First two properties are obvious; we shall prove that the set is closed. Set 
	$$V_0:= V\cap V^{-1},\quad\mbox{where}\quad V^{-1}=\{g^{-1}: g\in V\}.$$
	It is clear that $V_0$ is a symmetric neighborhood of the unit $e$ in $G,$ i.e., $V_0^{-1}=V_0.$
	Let $g_1\in \overline{W},$ where $ \overline{W}$ is the closure of $W.$ Then $g_1V_0\cap W\neq \emptyset,$ consequently, $g_1V_0\cap V^k\neq \emptyset$  for some
	$k,$ so there exists $g\in g_1V_0\cap V^k,$ $g=g_1v_0$  for $v_0\in V_0.$ Then
	$$g_1=gv_0^{-1}\in gV_0\subset gV\subset V^{k}V=V^{k+1}\subset W.$$
	Therefore $W$ is an open and closed set and $W=G,$ because $G$ is connected. 
	
    Now if $g_0, g_1\in G$ then $g_0=l_{g_0}(e),$ $g_1=l_{g_0}((g_0)^{-1}g_1),$ and 
    since the statement of Theorem \ref{contr} holds for elements $e$ and $(g_0)^{-1}g_1,$
     then it holds for $g_0$ and $g_1.$
\end{proof}

It follows from the proof of Theorem \ref{contr} that the triple $(V,\Phi^{-1},W)$ is a local chart in $G.$ 
The corresponding coordinate system is called {\it the coordinate system of the second kind}.

Every left-invariant (sub-)Finsler metric $d=d_F$ on a connected Lie group $G$ with Lie algebra $(\mathfrak{g},[\cdot,\cdot])$ 
is defined by a subspace  $\mathfrak{p}\subset \mathfrak{g}$, generating  $\mathfrak{g}$, and some norm $F$ on $\mathfrak{p}.$ 
A distance $d(g,h)$ for $g,h\in G$ is defined as the infimum of lengths $\int_0^T|\dot{g}(t)|dt$ of piecewise smooth paths 
$g=g(t),$ $0\leq t\leq T,$ such that $dl_{g(t)^{-1}}\dot{g}(t)\in \mathfrak{p}$ and $g(0)=g,$ $g(T)=h;$ $T$ is not fixed, 
$|\dot{g}(t)|=F(dl_{g(t)^{-1}}\dot{g}(t)).$ 
The existence of such paths and, consequently, the finiteness of $d$ are guaranteed by Theorem \ref{contr}.
Obviously, all three metric properties for $d$ are fulfilled.
If $\mathfrak{p}=\mathfrak{g}$ then $d$ is a left-invariant Finsler metric on $G$; if
$F(v)=\sqrt{\langle v, v\rangle},$ $v\in \mathfrak{p},$ where $\langle\cdot,\cdot\rangle$ is some scalar product on 
$\mathfrak{p},$ then  $d$ is a left-invariant sub-Riemannian metric on $G,$ and $d$ is a left-invariant 
Riemannian metric, if additionally $\mathfrak{p}=\mathfrak{g}.$

The following statements were proved in \cite{Ber881}. The space $(G,d)$ is a locally compact and complete. Then 
in consequence of S.E.~Con--Vossen theorem the space $(G,d)$ is a geodesic space, i.e. for any elements $g,h\in G$ 
there exists a shortest arc $c=c(t),$ $0\leq t\leq T,$ in $(G,d),$ which joins them. This means that $c$ is a continuous curve in $G,$ 
whose length in the metric space $(G,d)$ is equal to $d(g,h).$ Therefore we can assume that $c$ is parameterized by arc 
length, i.e. $T=d(g,h)$ and $d(c(t_1),c(t_2))=t_2-t_1$ if $0\leq t_1\leq t_2\leq d(g,h).$ Then $c=c(t),$ $0\leq t\leq d(g,h),$ is a Lipschitz 
curve relative to the smooth structure of the Lie group $G$. Therefore this curve is absolutely continuous. Then in consequence of  well--known 
theorem from mathematical analysis, there exists  a measurable, almost everywhere defined derivative function
$\dot{c}(t),$ $0\leq t\leq d(g,h)$, and $c(t)=c(0)+ \int_0^t\dot{c}(\tau)d\tau,$ $0\leq t\leq T.$

\begin{theorem}
	\label{topt}\cite{Ber88}
	Every shortest arc $g=g(t),$ $0\leq t\leq T=d(g_0,g_1)$,  in $(G,d)$ with $g(0)=g_0,$ $g(T)=g_1,$ is a solution of the time-optimal 
	problem for the control system (\ref{dyn}) with compact control region
		$$U=\{u\in \mathfrak{p}: F(u)\leq 1\}$$
	and indicated endpoints.
\end{theorem}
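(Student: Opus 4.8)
The plan is to verify that a shortest arc $c=g(t)$, $0\le t\le T=d(g_0,g_1)$, parameterized by arc length, is simultaneously (i) an admissible trajectory of the control system (\ref{dyn}) whose control takes values in $U$, and (ii) time-optimal among all such trajectories joining $g_0$ to $g_1$. Part (ii) is the easy half and I would dispose of it first: if a control $u(\cdot)$ with $F(u(\tau))\le 1$ steers $g_0$ to $g_1$ on $[0,T']$, its trajectory is an admissible path for the metric $d$, whence $d(g_0,g_1)\le\int_0^{T'}F(u(\tau))\,d\tau\le T'$; thus $T=d(g_0,g_1)\le T'$ for every competitor, which is precisely optimality once $c$ is known to be such a trajectory.

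So everything reduces to part (i). As recalled just before the statement, $c$ is Lipschitz, hence absolutely continuous, with respect to the smooth structure of $G$, so $\dot c(t)$ exists for almost every $t$ and I may set $u(t):=dl_{c(t)^{-1}}\dot c(t)\in\mathfrak g$. I must show $u(t)\in\mathfrak p$ a.e.\ (horizontality) and $F(u(t))=1$ a.e.; then $u(t)\in U$ and $c$ is a genuine trajectory of (\ref{dyn}) traversed in time $T=d(g_0,g_1)$. The tool is the exact identity $d(c(t),c(t+h))=|h|$ coming from the arc-length normalization $d(c(t_1),c(t_2))=t_2-t_1$, combined with the local model: at a point $t$ of differentiability, coordinates of the first kind give $c(t+h)=c(t)\exp(hu(t)+o(h))$, so by left-invariance $d(c(t),c(t+h))=d(e,\exp(hu(t)+o(h)))$.

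The heart of the matter, and the step I expect to be the main obstacle, is a pair of local (ball-box type) distance estimates at the unit, which I would obtain by comparing $d$ with the coordinates of the first or second kind furnished by Theorem \ref{contr}: an admissible path of length $\ell$ can move at most of order $\ell$ along a horizontal generator $e_1,\dots,e_r$, but at most of order $\ell^{\,m}$ along a direction of $\mathfrak p_m\setminus\mathfrak p_{m-1}$ with $m\ge 2$, the asymmetry being produced by the iterated brackets (\ref{vb}). Horizontality follows at once: if $u(t)\notin\mathfrak p$, then $hu(t)+o(h)$ has a component of order $h$ in some $\mathfrak p_m\setminus\mathfrak p_{m-1}$, $m\ge 2$, so $d(e,\exp(hu(t)+o(h)))\ge C|h|^{1/m}$, which for small $h$ exceeds $|h|=d(c(t),c(t+h))$, a contradiction; hence $u(t)\in\mathfrak p$ a.e. For $v=u(t)\in\mathfrak p$ the same estimates give the two-sided bound $d(e,\exp(hv))=|h|F(v)+o(h)$ — the upper bound is immediate since $s\mapsto\exp(sv)$ is admissible of length $|h|F(v)$, while the matching lower bound is the delicate point — and therefore $|h|=d(c(t),c(t+h))=|h|F(u(t))+o(h)$.

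Dividing the last identity by $|h|$ and letting $h\to 0$ yields $F(u(t))=1$ a.e., so $u(t)\in U$ and $c$ is an admissible trajectory of (\ref{dyn}) with controls in $U$, traversed in time $T=d(g_0,g_1)$. By the comparison of the first paragraph no competing trajectory joins $g_0$ to $g_1$ in smaller time. Hence every shortest arc solves the time-optimal problem, as claimed.
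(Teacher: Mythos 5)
The paper itself contains no proof of Theorem \ref{topt}: it is quoted from \cite{Ber88}, and the paragraph preceding it only records the ingredients any proof would start from (existence of shortest arcs, their Lipschitz regularity with respect to the smooth structure, a.e.\ differentiability). So your proposal has to be judged on its own merits. Its architecture is right: the optimality half is correct and essentially complete --- any trajectory of (\ref{dyn}) with $F(u(\tau))\le 1$ on $[0,T']$ is an admissible path of length $\le T'$, so $d(g_0,g_1)\le T'$ (modulo the routine remark that trajectories of measurable controls can be approximated in length by piecewise smooth admissible paths, since $d$ is defined via the latter) --- and you correctly isolate what remains: that the shortest arc itself is horizontal with $F(u(t))\le 1$ a.e.

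The genuine gap is that the two estimates carrying all of part (i) are exactly the hard content of the theorem, and you do not prove them; you only say you ``would obtain'' them from the coordinates furnished by Theorem \ref{contr}. That cannot work as stated. Theorem \ref{contr}, like the definition of $d$, produces only \emph{upper} bounds on the distance (one exhibits explicit controls reaching a given point), whereas both of your key claims are \emph{lower} bounds on $d$, i.e.\ upper bounds on what an arbitrary admissible path of length $\ell$ can reach: (1) displacement at most of order $\ell^m$ in the directions of $\mathfrak{p}_m\setminus\mathfrak{p}_{m-1}$, which is the hard inclusion of the ball--box theorem and is what yields horizontality; and (2) $d(e,\exp(hv))\ge |h|F(v)-o(h)$ for $v\in\mathfrak{p}$, which is the first-order (nilpotent) approximation of the metric. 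Claim (2) is genuinely delicate whenever $\mathfrak{p}\cap[\mathfrak{g},\mathfrak{g}]\neq\{0\}$ --- e.g.\ in the Finsler case $\mathfrak{p}=\mathfrak{g}$ on a non-abelian group, which the theorem covers: the naive argument (project the control onto $\mathfrak{g}/[\mathfrak{g},\mathfrak{g}]$ and use convexity of the norm) only bounds the length from below by a quotient norm, which can be strictly smaller than $F(v)$; a correct proof requires a bootstrap in which the endpoint error terms are controlled quadratically by the unknown length of the competing path. In short, your text is a correct reduction of Theorem \ref{topt} to two unproven metric estimates, and since those estimates contain essentially all the difficulty, the proposal does not constitute a proof.
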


In consequence of Theorem \ref{topt}, one can apply the Pontryagin maximum principle \cite{PBGM} for the time-optimal problem  from Theorem \ref{topt} 
and a covector function $\psi=\psi(t)\in T^{\ast}_{g(t)}$ to find shortest arcs on the Lie group $G$ with left-invariant sub-Finsler metric $d.$ 
The function $\psi$ can be considered as a left-invariant $1$-form on $(G,\cdot)$ and therefore it is natural to identify it with a covector function $\psi(t)\in\mathfrak{g}^{\ast}=T_e^{\ast}G.$ Then every optimal
trajectory $g(t),$ $0\leq t\leq T,$ is determined by some (piecewise continuous) optimal control $\overline{u}=\overline{u}(t)\in U,$ $0\leq t\leq T.$ 
Moreover, for some non-vanishing absolutely continuous function $\psi=\psi(t),$ $0\leq t\leq T,$ we have
\begin{equation}
\label{H} 
H=H(g,\psi,u)=\psi((dl_g)(u))=\psi(u),
\end{equation}
\begin{equation}
\label{partial}
\dot{g}=\frac{\partial H}{\partial \psi},\quad \dot{\psi}=-\frac{\partial H}{\partial g},
\end{equation}
\begin{equation}
\label{max}
H(\tau):=H(\psi(\tau),\overline{u}(\tau))=\psi(\tau)(\overline{u}(\tau))=\max_{u\in U}\psi(\tau)(u)
\end{equation}
at continuity points $\tau$  of the optimal control $\overline{u}=\overline{u}(t)$. 

\begin{definition}
	Later on,  an extremal for the problem from Theorem \ref{topt} is called a parametrized curve $g = g(t)$, $t\in \mathbb{R},$ 
	satisfying PMP for the time-optimal  problem.
	\end{definition}

\begin{remark}
	\label{pos}
For every extremal, $H(t)=\const:= M_0\geq 0,$ $t\in \mathbb{R},$ \cite{AS, PBGM}.  
\end{remark}

\begin{definition}
	An extremal is called normal (abnormal), if $M_0 > 0$ ($M_0=0$). Every normal extremal  is
	parameterized by arc length; proportionally changing $\psi=\psi(t),$ $t\in \mathbb{R},$ if it is necessary, one can assume that $M_0=1.$ 
	Every normal extremal for a left-invariant (sub-)Riemannian metric
	on a Lie group is a geodesic, i.e. a locally shortest curve \cite{LS95}.
	\end{definition}

\begin{theorem}
	\label{hameq}\cite{Ber14}
	The Hamiltonian system for the function $H$ on the Lie group $G=\GL(n)$ with the Lie algebra
	$\mathfrak{g}=\mathfrak{gl}(n)$ has a form
	\begin{equation}
	\label{sgo}
	g^{\prime}=g\cdot u,\quad g\in G, \quad u\in \frak{g},
	\end{equation}
	\begin{equation}
	\label{hame0}
	\psi(v)^{\prime}= \psi([u,v]),\quad g\in G, \quad u,v\in \frak{g}.
	\end{equation}
\end{theorem}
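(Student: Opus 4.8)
The plan is to specialize the Pontryagin maximum principle, as recorded in (\ref{H})--(\ref{partial}), to the matrix group $G=\GL(n)$, where everything can be written out in the global matrix coordinates $g_{ij}$. Throughout I identify $T_gG$ and $T^{\ast}_gG$ with $M(n)=\mathfrak{gl}(n)$, using the pairing $\langle p, X\rangle = \tr(p^T X)$ for $p\in T^{\ast}_gG$ and $X\in T_gG$.

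First I would establish (\ref{sgo}). Since the left shift $l_g\colon h\mapsto g\cdot h$ is the restriction to $G$ of a linear map of $M(n)$, its differential is again left multiplication by $g$, so that $(dl_g)(u)=g\cdot u$ for every $u\in\mathfrak{g}$. Hence the control system (\ref{dyn}) becomes $g'=g\cdot u$, and this is precisely the first Hamiltonian equation $\dot g=\partial H/\partial\psi$ for $H=\langle p, g u\rangle$ (here I temporarily write $p$ for the costate in the physical cotangent space $T^{\ast}_gG$, reserving $\psi$ for its left-invariant counterpart).

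Next I would compute the costate equation. Writing $H=\sum_{i,j}p_{ij}(gu)_{ij}=\tr(p^T g u)$ and differentiating in the coordinates $g_{ij}$ gives $\partial H/\partial g = p\,u^T$, so the second equation of (\ref{partial}) reads $\dot p=-p\,u^T$. To pass to the left-invariant description used in the statement, I recall that $\psi\in\mathfrak{g}^{\ast}$ is defined so that $H=\psi(u)$, i.e. $\psi(v)=\langle p,(dl_g)v\rangle=\tr(p^T g v)$ for $v\in\mathfrak{g}$; written as a matrix with respect to the pairing $\psi(v)=\tr(\psi^T v)$ this means $\psi=g^T p$. Differentiating and substituting $g'=gu$ together with $\dot p=-pu^T$ yields $\dot\psi=\dot g^T p+g^T\dot p=u^T\psi-\psi u^T$.

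Finally I would extract (\ref{hame0}). For a fixed $v\in\mathfrak{g}$ only $\psi$ is time-dependent, so $\psi(v)'=\tr(\dot\psi^T v)=\tr\big((\psi^T u-u\psi^T)v\big)$; applying the cyclic invariance of the trace and the bracket formula (\ref{slgl}), $[u,v]=uv-vu$, collapses this to $\tr\big(\psi^T(uv-vu)\big)=\psi([u,v])$. Conceptually this says $\dot\psi=\ad_u^{\ast}\psi$, the coadjoint evolution of the costate. The main thing to watch is the bookkeeping of transposes together with the sign coming from $\dot\psi=-\partial H/\partial g$, so that the two contributions $\dot g^T p$ and $g^T\dot p$ assemble into the commutator $u^T\psi-\psi u^T$ rather than cancelling.
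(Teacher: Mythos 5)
Your proof is correct and takes essentially the same route as the paper: both specialize the PMP Hamiltonian system in matrix coordinates, obtain $g'=gu$ and the costate equation $\dot p=-p\,u^T$ (the paper's $\psi_{ij}'=-(\psi u^T)_{ij}$), and then differentiate the left-translated costate $\psi(v)=\tr(p^Tgv)$ by the product rule to get $\psi([u,v])$. The only difference is notational: you package the computation as the matrix ODE $\dot\psi=u^T\psi-\psi u^T$ for $\psi=g^Tp$ and finish with cyclic invariance of the trace, while the paper carries out the identical computation in index sums.
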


\begin{proof}
	Each element $g\in G\subset \GL(n)\subset \mathbb{R}^{n^2}$ is defined by  its standard matrix coordinates  $g_{ij},$ 
	$i,j=1,\dots n,$ and $\psi$ is defined  by its components 	 $\psi_{ij}=\psi(e_{ij}),$ $i,j=1,\dots,n,$ where 
	$e_{ij}\in \mathfrak{g}$ is a matrix having $1$ in the $i$th row and the $j$th column and 0 in all other places.
	 	
	In consequence of (\ref{H}),
	\begin{equation}
	\label{H1}
	H(\psi,g,u)=\sum_{i,j=1}^n\psi_{ij}\left(\sum_{l=1}^ng_{il}u_{lj}\right)=
	\sum_{l,j=1}^n(g^T\psi)_{lj}u_{lj}.
	\end{equation}
	
	The variables $g_{ij},$ $\psi_{ij}$ must satisfy the Hamiltonian system of equations
	\begin{equation}
	\label{dx}
	g_{ij}^{\prime}=\frac{\partial H}{\partial \psi_{ij}}(\psi,g,u)=\sum_{l=1}^ng_{il}u_{lj}=(gu)_{ij},
	\end{equation}
	\begin{equation}
	\label{dp}
	\psi_{ij}^{\prime}= -\frac{\partial H}{\partial g_{ij}}=-\sum_{m=1}^{n}\psi_{im}u_{jm}=-(\psi u^T)_{ij}.
	\end{equation}
	The formula (\ref{dx}) is a special case of the formula (\ref{sgo}). It is clear that
	$$\psi(v)=\psi(gv)=\sum_{i,j=1}^n\psi_{ij}\left(\sum_{l=1}^ng_{il}v_{lj}\right).$$
	On the ground of formulae (\ref{dx}) and (\ref{dp}) we get from here that
	$$(\psi(v))^{\prime}=\sum_{i,j=1}^n \psi_{ij}^{\prime}\left(\sum_{l=1}^ng_{il}v_{lj}\right)+\sum_{i,j=1}^n\psi_{ij}\left(\sum_{l=1}^n g_{il}^{\prime}v_{lj}\right)=$$
	$$-\sum_{i,j=1}^n\left(\sum_{m=1}^{n}\psi_{im}u_{jm}\sum_{l=1}^ng_{il}v_{lj}\right)+\sum_{i,j=1}^n\psi_{ij}\left(\sum_{l,m=1}^ng_{im}u_{ml}v_{lj}\right)=$$
	$$-\sum_{i,j=1}^n\psi_{ij}\left(\sum_{l=1}^ng_{il}(vu)_{lj}\right)+\sum_{i,j=1}^n\psi_{ij}\left(\sum_{l=1}^ng_{il}(uv)_{lj}\right)=
	\sum_{i,j=1}^n\psi_{ij}(g[u,v])_{ij}=\psi([u,v]),$$
	which proves the formula (\ref{hame0}).
\end{proof}

\begin{theorem}
	\label{hameq1}\cite{Ber14}
	The Hamiltonian system for the function $H$ on a Lie group $G$ with Lie algebra $\mathfrak{g}$ has a form
	\begin{equation}
	\label{sg}
	\dot{g}=dl_{g}(u),\quad g\in G, \quad u\in \frak{g},
	\end{equation}
	\begin{equation}
	\label{hame}
	\psi(v)^{\prime}= \psi([u,v]),\quad g\in G, \quad u,v\in \frak{g}.
	\end{equation}
\end{theorem}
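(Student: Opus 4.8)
The first equation (\ref{sg}) is nothing but the control system (\ref{dyn}) itself, read off from the first Hamilton equation $\dot g=\partial H/\partial\psi$ of (\ref{partial}); so the entire content lies in the covector equation (\ref{hame}). The plan is to carry out, in a general chart, the same computation that proves Theorem \ref{hameq} for $\GL(n)$, with the matrix product $gu$ replaced by an arbitrary left-invariant field. First I would fix a trajectory $g=g(t)$ and work in a local chart $(x_1,\dots,x_n)$ around $g(t)$. Writing the left-invariant field $V_w=dl_g(w)$ in coordinates as $V_w=\sum_i a_i(x,w)\partial_{x_i}$, with $a_i$ linear in $w\in\mathfrak g$, and $\psi=\sum_i p_i\,dx_i$, formula (\ref{H}) becomes $H=\sum_i p_i a_i(x,u)$, and the left-invariant covector appearing in the statement is $\psi(v)=\sum_i p_i a_i(x,v)$.

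Next I would write out (\ref{partial}) in these coordinates: the first equation again reproduces the dynamics $\dot x_j=a_j(x,u)$, while the second reads $\dot p_j=-\sum_i p_i\,\partial_{x_j}a_i(x,u)$. Differentiating $\psi(v)=\sum_j p_j a_j(x,v)$ along the trajectory and substituting these two relations, the two resulting double sums reorganize, after relabelling indices, into
\begin{equation*}
\frac{d}{dt}\psi(v)=\sum_j p_j\Big(V_u\big(a_j(\cdot,v)\big)-V_v\big(a_j(\cdot,u)\big)\Big).
\end{equation*}
The parenthesis is precisely the $j$-th coordinate of the commutator $[V_u,V_v]$ of the two vector fields, since in any chart $[X,Y]^j=X(Y^j)-Y(X^j)$.

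The final and only conceptual step is the identity $[V_u,V_v]=V_{[u,v]}$: the commutator of the left-invariant fields attached to $u,v\in\mathfrak g$ is the left-invariant field attached to $[u,v]$, with the bracket normalized exactly as in the definition (\ref{bra}) via the adjoint representation. Granting this, $\sum_j p_j\,[V_u,V_v]^j=\sum_j p_j\,a_j(x,[u,v])=\psi([u,v])$, which is (\ref{hame}). I expect the main obstacle to be the bookkeeping around this identity: one must verify that the sign convention coming from (\ref{bra}), where $[u,v]=\frac{\partial}{\partial t}\frac{\partial}{\partial s}[\exp(tu)\exp(sv)\exp(-tu)]|_0$, agrees with the commutator of left-invariant fields and not with its negative. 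As a sanity check one can specialize to $G=\GL(n)$: there $a_{ij}(g,u)=(gu)_{ij}$, a short computation gives $[V_u,V_v]=V_{uv-vu}$, and by (\ref{slgl}) this matches both the claimed bracket and the output $\psi([u,v])$ of Theorem \ref{hameq}, confirming the normalization.
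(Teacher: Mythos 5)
Your proof is correct, but it takes a genuinely different route from the paper's. The paper disposes of this theorem in one sentence: having computed the Hamiltonian system on $\GL(n)$ in matrix coordinates (Theorem \ref{hameq}), it invokes the fact that every Lie group is locally isomorphic to a connected (possibly virtual) Lie subgroup of $\GL(n)$ --- essentially Ado's theorem --- and transfers the local differential statement along that isomorphism. You instead generalize the \emph{proof} of Theorem \ref{hameq} rather than its statement: you rerun the coordinate computation in an arbitrary chart, reorganize the two double sums into $\sum_j p_j\bigl(V_u(a_j(\cdot,v))-V_v(a_j(\cdot,u))\bigr)=\sum_j p_j\,[V_u,V_v]^j$ (this bookkeeping is right), and finish with the identity $[V_u,V_v]=V_{[u,v]}$. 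The trade-off: the paper's argument is shorter on the page but rests on a deep embedding theorem and leaves implicit both the restriction of the Hamiltonian system to a virtual subgroup and its invariance under local isomorphism; yours is intrinsic and self-contained except for the one identity you explicitly grant. That identity is standard, and your $\GL(n)$ check does settle the normalization question you raise, but it can be closed in two lines from the paper's own definitions, with no chart computation at all: the flow of $V_u$ is right translation $g\mapsto g\exp(tu)$, so
\begin{equation*}
[V_u,V_v](g)=\frac{d}{dt}\Big|_{t=0}\, dr_{\exp(-tu)}\bigl(V_v(g\exp(tu))\bigr)
=\frac{d}{dt}\Big|_{t=0}\, dl_g\bigl(\Ad(\exp(tu))(v)\bigr)=dl_g([u,v])=V_{[u,v]}(g),
\end{equation*}
where the middle equality uses $r_{\exp(-tu)}\circ l_{g\exp(tu)}=l_g\circ l_{\exp(tu)}\circ r_{\exp(-tu)}$ together with the paper's definition $\Ad(g)=dl_g\circ dr_{g^{-1}}$, and the last uses $\ad=(d\Ad)_e$. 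With that inserted, your argument is complete and arguably more elementary than the paper's, since it never needs $G$ to embed, even locally, in a matrix group.
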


\begin{proof}
	In consequence of Theorem \ref{hameq}, Theorem \ref{hameq1} holds for every matrix Lie group and for every Lie group $(G,\cdot),$ because 
	it is known that $(G,\cdot)$ is  locally isomorphic to some connected Lie subgroup (may be, virtual)  of the Lie group $\GL(n)\subset \mathbb{R}^{n^2}.$  
\end{proof}

It follows from Theorem \ref{hameq1}, especially from (\ref{hame}), and Remark \ref{pos} that

\begin{theorem}
	\label{norm}
	If $\dim(G)=3$, $\dim(\mathfrak{p})\geq 2$  in Theorem \ref{topt} then every extremal of the problem from Theorem  \ref{topt} is normal.
\end{theorem}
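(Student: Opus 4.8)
The plan is to rule out abnormal extremals: I will show that the case $M_0 = 0$ cannot occur for a (nonconstant) extremal, so that by Remark \ref{pos} every extremal has $M_0 > 0$ and is therefore normal. Accordingly, suppose for contradiction that $g = g(t)$ is an extremal with $M_0 = 0$, carried by a covector function $\psi = \psi(t) \in \mathfrak{g}^{\ast}$, which by PMP never vanishes, and by an optimal control $\overline{u} = \overline{u}(t) \in U$.

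First I would translate the maximality condition (\ref{max}) into a linear condition on $\psi$. Since $F$ is a norm, $0$ lies in the relative interior of the control region $U = \{u \in \mathfrak{p} : F(u) \leq 1\}$ inside $\mathfrak{p}$, so the equality $\max_{u \in U}\psi(\tau)(u) = M_0 = 0$ forces the functional $\psi(\tau)$ to vanish on all of $\mathfrak{p}$ (otherwise it would take a positive value at a point of $U$ arbitrarily near $0$). As $\psi$ is absolutely continuous and $\overline{u}$ is piecewise continuous, this holds on a dense set of $\tau$, hence by continuity $\psi(t)|_{\mathfrak{p}} = 0$ for \emph{every} $t$; that is, $\psi(t) \in \mathfrak{p}^{\perp} \subset \mathfrak{g}^{\ast}$ for all $t$.

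The two cases $\dim\mathfrak{p} = 3$ and $\dim\mathfrak{p} = 2$ are then treated separately. If $\dim\mathfrak{p} = 3$ then $\mathfrak{p} = \mathfrak{g}$, so $\mathfrak{p}^{\perp} = \{0\}$ and $\psi(t) \equiv 0$, contradicting the nontriviality required by PMP. If $\dim\mathfrak{p} = 2$, I fix a basis $e_1, e_2$ of $\mathfrak{p}$; since $\mathfrak{p}$ generates $\mathfrak{g}$ (Definition \ref{gen}), the vector $e_3 := [e_1, e_2]$ cannot lie in $\mathfrak{p}$, for otherwise $\mathfrak{p}$ would be a subalgebra and $\mathfrak{p}_m = \mathfrak{p}$ for all $m$, so $(e_1, e_2, e_3)$ is a basis of the $3$-dimensional $\mathfrak{g}$. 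Because $\mathfrak{p}^{\perp}$ is one-dimensional and $\psi(t) \neq 0$, we have $\psi(t)(e_3) \neq 0$ for all $t$, while $\psi(t)(e_1) = \psi(t)(e_2) = 0$.

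The final step is to differentiate the identities $\psi(t)(e_i) \equiv 0$, $i = 1, 2$, using the adjoint equation (\ref{hame}) of Theorem \ref{hameq1}: for almost every $t$ one gets $0 = \frac{d}{dt}\psi(t)(e_i) = \psi(t)([\overline{u}(t), e_i])$. Writing $\overline{u}(t) = a(t)e_1 + b(t)e_2$, the brackets compute to $[\overline{u}, e_1] = -b\,e_3$ and $[\overline{u}, e_2] = a\,e_3$, so since $\psi(t)(e_3) \neq 0$ these identities give $a(t) = b(t) = 0$ for almost every $t$. Hence $\overline{u}(t) = 0$ a.e., and therefore $\dot{g}(t) = dl_{g(t)}(\overline{u}(t)) = 0$, so $g$ is constant, contradicting the assumption that $g$ is a nonconstant (shortest-arc) extremal. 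I expect the main delicate points to be the continuity upgrade of the maximality condition from almost every $\tau$ to all $t$, and the structural observation that generation of $\mathfrak{g}$ by a $2$-dimensional $\mathfrak{p}$ forces $[e_1,e_2] \notin \mathfrak{p}$; once these are in place, the contradiction is a one-line bracket computation.
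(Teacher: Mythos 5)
Your argument is correct and is precisely the argument the paper leaves implicit: the paper's entire proof consists of the remark that the statement ``follows from Theorem \ref{hameq1}, especially from (\ref{hame}), and Remark \ref{pos}'', and your proposal supplies exactly those details --- first deducing from $M_0=0$ that $\psi(t)$ annihilates $\mathfrak{p}$ for all $t$, then differentiating via (\ref{hame}) against the basis $e_1,e_2,e_3=[e_1,e_2]$ of $\mathfrak{g}$ (with $[e_1,e_2]\notin\mathfrak{p}$ forced by the generation hypothesis). The only caveat is that your final contradiction requires nonconstancy of $g$ --- a constant curve with control $\overline{u}\equiv 0$ and constant covector $\psi_0\in\mathfrak{p}^{\perp}\setminus\{0\}$ formally satisfies all PMP conditions for the time-optimal problem --- so what you actually prove is that every abnormal extremal is constant; this is evidently the intended content of the theorem, since the extremals of interest in Theorem \ref{topt} are shortest arcs joining distinct points.
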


The following lemma holds.

\begin{lemma}
	\label{lem2}\cite{Ber17}
	Let $g=g(t)$, $t\in (a,b)$, be a smooth path in the Lie group $G$. Then
	\begin{equation}
	\label{pro}
	(g(t)^{-1})^{\prime}=-g(t)^{-1} g^{\prime}(t) g(t)^{-1}.
	\end{equation}
\end{lemma}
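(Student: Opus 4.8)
The plan is to obtain (\ref{pro}) by differentiating the constant identity $g(t)\,g(t)^{-1}=E$, where $E$ is the unit matrix. Following the convention used throughout the paper (and in the proof of Theorem \ref{hameq1}), I view the path as lying inside $\GL(n)\subset\mathbb{R}^{n^2}$, so that all the products in the statement are ordinary matrix products and all matrix entries are honest real-valued functions of $t$. This reduces the lemma to a one-line computation, the only content of which is a correct application of the product rule.

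First I would record that $t\mapsto g(t)^{-1}$ is itself smooth on $(a,b)$: on $\GL(n)$ the inversion map is smooth, since by Cramer's rule the entries of $g^{-1}$ are rational functions of the entries of $g$ with the nonvanishing determinant in the denominator, and the composition of this smooth map with the smooth path $g=g(t)$ is again smooth. Hence $(g(t)^{-1})'$ exists. Next I would differentiate $g(t)\,g(t)^{-1}=E$ by the Leibniz rule for matrix-valued functions — legitimate because matrix multiplication is bilinear and is therefore differentiated entrywise exactly like a scalar product — obtaining
$$0=\frac{d}{dt}E=g'(t)\,g(t)^{-1}+g(t)\,(g(t)^{-1})'.$$
Solving this linear relation for the unknown $(g(t)^{-1})'$, i.e. left-multiplying by $g(t)^{-1}$, yields precisely (\ref{pro}).

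The computation itself is trivial algebra, so the only points requiring care are the two analytic facts above, namely the smoothness of inversion and the validity of the matrix product rule; both are entirely standard. The one genuinely separate issue is that in an abstract Lie group the symbols $g^{-1}g'g^{-1}$ have no \emph{a priori} meaning. I would handle this exactly as in the proof of Theorem \ref{hameq1}: since every Lie group is locally isomorphic to a (possibly virtual) connected subgroup of some $\GL(n)$, the matrix identity transports the formula to $G$. Alternatively, one may give the coordinate-free reformulation by differentiating the multiplication map $\mu(a,b)=ab$ and using $(d\mu)_{(g,g^{-1})}(X,Y)=(dr_{g^{-1}})_g(X)+(dl_g)_{g^{-1}}(Y)$; applying this along $g(t)\,g(t)^{-1}\equiv e$ to $X=g'(t)$ and $Y=(g(t)^{-1})'$ gives $(g(t)^{-1})'=-(dl_{g^{-1}})_e(dr_{g^{-1}})_g(g'(t))$, which is the invariant version of (\ref{pro}). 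I expect the main (indeed the only) obstacle to be purely presentational: deciding which reading of the products to commit to, the literal matrix one being by far the shortest.
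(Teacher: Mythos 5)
Your proposal is correct and coincides with the paper's own proof: the authors likewise differentiate the identity $g(t)\,g(t)^{-1}=e$ by the product rule to get $0=g^{\prime}(t)g(t)^{-1}+g(t)(g(t)^{-1})^{\prime}$ and solve for $(g(t)^{-1})^{\prime}$. Your added remarks on smoothness of inversion and on transporting the matrix identity to an abstract Lie group are careful elaborations of points the paper leaves implicit, but the argument is the same.
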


\begin{proof}
	Differentiating the identity $g(t) g(t)^{-1}=e$ by $t$, we get
	$$0=(g(t)g(t)^{-1})^{\prime}=g^{\prime}(t) g(t)^{-1}+g(t)(g(t)^{-1})^{\prime},$$
	whence the equality  (\ref{pro}) follows immediately.
\end{proof}

\begin{theorem}
	\label{dad}\cite{Ber17}
	Let $\psi\in\mathfrak{g}^{\ast}=T^{\ast}_eG$ be a covector, 
	$$\Ad^{\ast}\psi(g):=(\Ad g)^{\ast}(\psi)=\psi\circ Ad(g), \quad g\in G,$$ an action of the coadjoint representation of the Lie group $G$ on $\psi$. Then
	$$(d(\Ad^{\ast}\psi)(w))(v)=((\Ad g_0)^{\ast}(\psi))([u,v]),$$
	if
	$$u,v\in\frak{g},\quad w=dl_{g_0}(u)\in T_{g_0}G,\quad g_0\in G.$$
\end{theorem}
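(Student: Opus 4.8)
The plan is to realize the tangent vector $w=dl_{g_0}(u)$ as the velocity at $t=0$ of the smooth curve $c(t)=g_0\exp(tu)$ in $G$; indeed, since $\frac{d}{dt}\big|_{t=0}\exp(tu)=u$, the chain rule gives $\frac{d}{dt}\big|_{t=0}g_0\exp(tu)=dl_{g_0}(u)=w$. Because the map $\Ad^{\ast}\psi\colon G\to\mathfrak{g}^{\ast}$ takes values in the vector space $\mathfrak{g}^{\ast}$, its tangent space at any point is canonically identified with $\mathfrak{g}^{\ast}$ itself, and evaluation of a covector on the fixed vector $v\in\mathfrak{g}$ is a linear functional on $\mathfrak{g}^{\ast}$ that therefore commutes with differentiation. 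Hence it suffices to compute the scalar derivative $\frac{d}{dt}\big|_{t=0}\big((\Ad^{\ast}\psi)(c(t))\big)(v)$ and to identify it with the right-hand side.

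First I would expand, using the definition $\Ad^{\ast}\psi(g)=\psi\circ\Ad(g)$,
\begin{equation*}
\big((\Ad^{\ast}\psi)(g_0\exp(tu))\big)(v)=\psi\big(\Ad(g_0\exp(tu))(v)\big).
\end{equation*}
Since $\Ad\colon G\to\GL(n)$ is a homomorphism of Lie groups, one has $\Ad(g_0\exp(tu))=\Ad(g_0)\circ\Ad(\exp(tu))$. By formula (\ref{digexp}) together with $\ad=(d\Ad)_e$, this yields $\Ad(\exp(tu))=\exp(t\,\ad(u))=E+t\,\ad(u)+O(t^2)$, so that
\begin{equation*}
\Ad(g_0\exp(tu))(v)=\Ad(g_0)\big(v+t[u,v]+O(t^2)\big),
\end{equation*}
where I have used $\ad(u)(v)=[u,v]$ and the linearity of $\Ad(g_0)$.

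Differentiating the composite with the fixed linear functional $\psi\circ\Ad(g_0)$ at $t=0$ then gives
\begin{equation*}
(d(\Ad^{\ast}\psi)(w))(v)=\psi\big(\Ad(g_0)([u,v])\big)=((\Ad g_0)^{\ast}(\psi))([u,v]),
\end{equation*}
which is the asserted identity. The computation is essentially routine; the only point requiring care is the bookkeeping of tangent-space identifications, namely justifying that the differential of the $\mathfrak{g}^{\ast}$-valued map $\Ad^{\ast}\psi$ may be computed after pairing with the fixed $v$ and that this pairing commutes with $\frac{d}{dt}$. Once this is granted, the statement is an immediate consequence of the chain rule together with the homomorphism property of $\Ad$ and the defining relation $\ad=(d\Ad)_e$.
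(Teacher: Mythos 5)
Your proof is correct, but it takes a genuinely different route from the paper's. The paper proves Theorem \ref{dad} by working explicitly in a matrix Lie group, where $\Ad(g)(v)=gvg^{-1}$ and $dl_g(u)=gu$: it takes an \emph{arbitrary} smooth curve $g(t)$ with $g(0)=g_0$, $g'(0)=w$, and computes $\bigl(\psi(g(t)vg(t)^{-1})\bigr)'(0)$ by the Leibniz rule together with Lemma \ref{lem2} for $(g(t)^{-1})'$; the bracket then emerges as the commutator $g_0(uv-vu)g_0^{-1}$. You instead choose the \emph{specific} curve $c(t)=g_0\exp(tu)$ representing $w$, factor $\Ad(g_0\exp(tu))=\Ad(g_0)\circ\Ad(\exp(tu))$ by the homomorphism property, and expand $\Ad(\exp(tu))=\exp(t\,\ad(u))=E+t\,\ad(u)+O(t^2)$ via formulae (\ref{digexp}) and (\ref{expgl}), so the bracket appears directly as $\ad(u)(v)=[u,v]$ by the paper's very definition of the Lie bracket. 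The trade-off: the paper's computation is elementary matrix calculus, but as written it covers only matrix groups and tacitly relies on the reduction of a general Lie group to a locally isomorphic matrix group (the same reduction invoked for Theorem \ref{hameq1}); your argument is intrinsic and applies verbatim to an abstract Lie group, avoids the derivative-of-the-inverse lemma entirely, and exploits the freedom to pick a convenient representative curve for $w$ --- at the mild cost of the tangent-space bookkeeping you correctly flag (that pairing with the fixed $v$ commutes with $d/dt$ because it is a linear functional on the vector space $\mathfrak{g}^{\ast}$).
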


\begin{proof}
	In the case of a matrix Lie group, 
	$$\Ad(g)(v)=gvg^{-1},\quad dl_g(u)=gu,\quad u,v\in\mathfrak{g}, \quad g\in G.$$
	We choose a smooth path $g=g(t)$, $t\in (-\varepsilon,\varepsilon)$, in the Lie group $G$ such that $g(0)=g_0$, $g^{\prime}(0)=w$. 
	Then by Lemma \ref{lem2},	
	$$(d(\Ad^{\ast}\psi)(w))(v)=(\psi(g(t)vg(t)^{-1}))^{\prime}(0)=
	\psi((g(t) vg(t)^{-1})^{\prime}(0))=$$
	$$\psi(g^{\prime}(0)vg_0^{-1}+g_0 v(g(t)^{-1})^{\prime}(0))=
	\psi(g_0uvg_0^{-1}-g_0v(g_0^{-1}g^{\prime}(0)g_0^{-1}))=$$
	$$\psi(g_0uvg_0^{-1}-g_0v(g_0^{-1}g_0ug_0^{-1}))=
	\psi(g_0uvg_0^{-1}-g_0vug_0^{-1})=$$
	$$\psi(g_0[u,v]g_0^{-1})=((\Ad g_0)^{\ast}(\psi))([u,v]),$$
	as required.
\end{proof}

It follows from Theorems \ref{hameq1} and \ref{dad} that 

\begin{theorem}
	\label{main}
	\cite{BerGich}
	
	1. Any normal extremal  $g=g(t):\,\mathbb{R}\rightarrow G$  (parameterized by arc length and with origin $e\in G$), 
	of left-invariant (sub-)Finsler metric $d$ on a Lie group $G$, defined by a norm  $F$ on the subspace 
	$\mathfrak{p}\subset\frak{g}$ with closed unit ball $U$, is a Lipschitz integral curve of the following
	vector field 
	$$v(g)=dl_g(u(g)),\quad u(g)=\psi_0(\Ad(g)(w(g)))w(g),\quad w(g)\in U, $$
	$$\psi_0(\Ad(g)(w(g)))=\max_{w\in U}\psi_0(\Ad(g)(w)),$$
	where $\psi_0\in\frak{g}^{\ast}$ is some fixed covector with $\max_{v\in U}\psi_0(v)=1.$
	
	2. (Conservation law) In addition, $\psi(t)(g(t)^{-1}g^{\prime}(t))\equiv1$ for all $t\in \mathbb{R}$, where $\psi(t):=(\Ad g(t))^{\ast}(\psi_0)$.
\end{theorem}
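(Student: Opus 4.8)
The plan is to run the Pontryagin maximum principle for the normal extremal $g=g(t)$ and then to recognize the PMP covector function as a coadjoint orbit of a single fixed covector. By Theorem \ref{topt} and the discussion following it, the normal extremal comes equipped with an optimal control $\overline{u}=\overline{u}(t)\in U$ and a non-vanishing absolutely continuous covector function $\psi=\psi(t)\in\mathfrak{g}^{\ast}$ satisfying the dynamics $g'(t)=dl_{g(t)}(\overline{u}(t))$, the adjoint equation $\psi(t)(v)'=\psi(t)([\overline{u}(t),v])$ from Theorem \ref{hameq1}, and the maximum condition $\psi(t)(\overline{u}(t))=\max_{u\in U}\psi(t)(u)=H(t)$. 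Since the extremal is normal and parameterized by arc length, the normalization of Remark \ref{pos} gives $H(t)\equiv 1$. I set $\psi_0:=\psi(0)$; then $\max_{v\in U}\psi_0(v)=H(0)=1$, as required by the statement.

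First I would introduce the candidate $\tilde{\psi}(t):=(\Ad g(t))^{\ast}(\psi_0)=\psi_0\circ\Ad(g(t))$ and show it coincides with the PMP covector $\psi(t)$. Applying Theorem \ref{dad} along the path $g(t)$ with $g_0=g(t)$, $u=\overline{u}(t)$, and $w=g'(t)=dl_{g(t)}(\overline{u}(t))$, I obtain at every point of differentiability
$$\frac{d}{dt}\bigl(\tilde{\psi}(t)(v)\bigr)=\bigl(d(\Ad^{\ast}\psi_0)(g'(t))\bigr)(v)=\bigl((\Ad g(t))^{\ast}\psi_0\bigr)([\overline{u}(t),v])=\tilde{\psi}(t)([\overline{u}(t),v]),$$
for each fixed $v\in\mathfrak{g}$. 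Thus $\tilde{\psi}(t)$ solves exactly the same linear adjoint equation (with bounded measurable coefficients determined by $\overline{u}(t)$) as $\psi(t)$, and it has the identical initial value $\tilde{\psi}(0)=(\Ad e)^{\ast}\psi_0=\psi_0=\psi(0)$. By uniqueness of solutions of such an equation I conclude $\psi(t)=(\Ad g(t))^{\ast}(\psi_0)$ for all $t$. This identification is the crux of the argument and the step I expect to be the main obstacle, since it is what converts the abstract covector evolution supplied by PMP into the explicit coadjoint transport from which the vector field in the statement is built.

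Having this, part 1 follows by substitution. The maximum condition becomes $\psi_0(\Ad(g(t))(\overline{u}(t)))=\max_{u\in U}\psi_0(\Ad(g(t))(u))$, which is precisely the condition defining $w(g(t))$, so one may take $\overline{u}(t)=w(g(t))$ (choosing, at points where the maximizer is not unique, the value realized by the extremal). Moreover the common maximum value equals $H(t)=1$, so $u(g(t))=\psi_0(\Ad(g(t))(w(g(t))))\,w(g(t))=\overline{u}(t)$, whence $g'(t)=dl_{g(t)}(\overline{u}(t))=v(g(t))$; that is, $g(t)$ is an integral curve of the stated vector field. Lipschitz continuity is immediate, because the body-frame velocity $\overline{u}(t)$ lies in the compact set $U$, so the curve moves with bounded speed (indeed unit speed in the metric $d$).

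Finally, for part 2 I would read off the conservation law from the same ingredients. Writing $g(t)^{-1}g'(t):=dl_{g(t)^{-1}}(g'(t))=\overline{u}(t)\in\mathfrak{g}$ and using $\psi(t)=(\Ad g(t))^{\ast}(\psi_0)$ together with $H(t)\equiv 1$, I get
$$\psi(t)\bigl(g(t)^{-1}g'(t)\bigr)=\psi(t)(\overline{u}(t))=H(t)=1,$$
for all $t\in\mathbb{R}$, which is the asserted identity.
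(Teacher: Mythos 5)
Your proposal is correct and follows essentially the same route as the paper: the paper derives Theorem \ref{main} precisely by combining the PMP adjoint equation of Theorem \ref{hameq1} with the coadjoint-differentiation formula of Theorem \ref{dad}, which is exactly your identification $\psi(t)=(\Ad g(t))^{\ast}(\psi_0)$ via uniqueness for the linear adjoint ODE, followed by reading off the maximum condition and the conservation law $H(t)\equiv 1$. The details you supply (taking $\psi_0=\psi(0)$, matching $\overline{u}(t)$ with the maximizer $w(g(t))$, and the bounded-speed Lipschitz argument) are the natural filling-in of the implication the paper leaves implicit.
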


\begin{remark}
	Every  extremal with origin $g_0$ is obtained by the left shift $l_{g_0}$ from some extremal with origin $e.$ 
\end{remark}

\begin{remark}
	In (sub-)Riemannian case, the vector $u(g)$ is characterized by condition $\langle u(g),v\rangle=\psi_0(Ad(g)(v))$ for all $v\in \mathfrak{p}.$
		In Riemannian case, every  extremal is a normal geodesic, and  we can assume that $\psi_0$ is an 
		unit vector in $(\mathfrak{p}=\mathfrak{g},(\cdot,\cdot)),$ setting $\psi_0(v)=(\psi_0,v),$ $v\in \mathfrak{g}.$ Moreover, $\dot{g}(0)=\psi_0.$
\end{remark}

\begin{corollary}
	\label{bi}
	Every geodesic of a biinvariant Riemannian metric on a Lie group with the unit origin is its $1$-parameter subgroup.
\end{corollary}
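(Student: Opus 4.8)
The plan is to specialize Theorem \ref{main} together with the Riemannian Remark to the biinvariant case, to make the resulting geodesic vector field completely explicit, and then to recognize its integral curve through $e$ as a one-parameter subgroup.

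First I would record the Lie-algebraic meaning of biinvariance. A biinvariant Riemannian metric is in particular left-invariant, hence of the form treated in Theorem \ref{main} with $\mathfrak{p}=\mathfrak{g}$ and $F(v)=\sqrt{\langle v,v\rangle}$ for a scalar product $\langle\cdot,\cdot\rangle$ on $\mathfrak{g}$. Since $\Ad(g)=dl_g\circ dr_{g^{-1}}$ and since right-invariance makes each $dr_{g^{-1}}\colon T_eG\to T_{g^{-1}}G$ an isometry while left-invariance makes each $dl_g\colon T_{g^{-1}}G\to T_eG$ an isometry, their composition $\Ad(g)$ is orthogonal for $\langle\cdot,\cdot\rangle$; equivalently,
$$\langle\Ad(g)v,\Ad(g)w\rangle=\langle v,w\rangle,\qquad \Ad(g)^{*}=\Ad(g)^{-1}=\Ad(g^{-1}),\quad g\in G,\ v,w\in\mathfrak{g}.$$

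Next I would make the geodesic vector field explicit. Let $g=g(t)$ be a geodesic with $g(0)=e$, parameterized by arc length, and set $\psi_0:=\dot g(0)$, a unit vector, with $\psi_0(v)=\langle\psi_0,v\rangle$ as in the Riemannian Remark. The characterization $\langle u(g),v\rangle=\psi_0(\Ad(g)v)=\langle\psi_0,\Ad(g)v\rangle$ for all $v\in\mathfrak{g}$, combined with the displayed $\Ad$-invariance, yields $\langle\psi_0,\Ad(g)v\rangle=\langle\Ad(g^{-1})\psi_0,v\rangle$, whence
$$u(g)=\Ad(g^{-1})\psi_0\quad\text{and}\quad v(g)=dl_g(\Ad(g^{-1})\psi_0).$$
By Theorem \ref{main} the geodesic is the integral curve, through $e$, of this (now smooth) vector field.

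The decisive step is then to check that the one-parameter subgroup $\phi(t)=\exp(t\psi_0)$ is itself an integral curve of $v$ through $e$. Using $\Ad(\exp X)=\exp(\ad X)$ from (\ref{digexp}) and $\ad(\psi_0)\psi_0=[\psi_0,\psi_0]=0$, I get $\Ad(\phi(t)^{-1})\psi_0=\exp(-t\,\ad\psi_0)\psi_0=\psi_0$, so $u(\phi(t))\equiv\psi_0$ and $\dot\phi(t)=dl_{\phi(t)}(\psi_0)=v(\phi(t))$, while $\phi(0)=e$. (In matrix form this is transparent: $v(g)=g\cdot g^{-1}\psi_0 g=\psi_0 g$ is the right-invariant field, whose integral curve through $e$ solves $\dot g=\psi_0 g$, i.e. $g(t)=\exp(t\psi_0)$.) Since $v$ is smooth, integral curves through a given point are unique, so $g(t)=\exp(t\psi_0)$, a one-parameter subgroup. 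The only point demanding care—the main obstacle—is the passage to $u(g)=\Ad(g^{-1})\psi_0$: it rests entirely on the $\Ad$-invariance of the scalar product, that is, on biinvariance, and on the identification of $\psi_0$ with the unit initial velocity supplied by the Riemannian Remark; once these are in hand, uniqueness of integral curves of the explicit smooth field closes the argument.
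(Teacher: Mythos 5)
Your proposal is correct and takes essentially the same route as the paper: both derive $u(g)=\Ad(g^{-1})(\psi_0)$ from the $\Ad$-invariance of the scalar product and then recognize $v(g)=dl_g(\Ad(g^{-1})(\psi_0))=dr_g(\psi_0)$ as the right-invariant vector field whose integral curve through $e$ is the $1$-parameter subgroup $\exp(t\psi_0)$. The only cosmetic difference is that you verify this last step explicitly (via $\ad(\psi_0)\psi_0=0$ and uniqueness of integral curves of a smooth field), whereas the paper simply cites the right invariance of $v(g)$.
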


\begin{proof} 
	This statement is a consequence of the right invariance of the vector field
	 $v(g)=dl_{g}(\Ad(g^{-1})(\psi_0))=dr_{g}(\psi_0),$
	since $ (\Ad(g^{-1})(\psi_0),(\Ad(g^{-1})(\psi_0))\equiv 1, $
	$$(\Ad(g)^{\ast}(\psi_0),v)=(\psi_0,\Ad(g)(v))=(\Ad(g^{-1})(\psi_0),v) \Rightarrow u(g)=\Ad(g^{-1})(\psi_0).$$
\end{proof}

\begin{theorem}
	\label{vf}
	If $v(g_0)\neq 0,$ $g_0\in G,$ then an integral curve of the vector field  $v(g), g\in G,$ with origin $g_0$ is a normal extremal 
	parametrized proportionally to arc length with the proportionality factor $|dl_{g_0^{-1}}(v(g_0))|.$
\end{theorem}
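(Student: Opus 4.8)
The plan is to exhibit, along the given integral curve $g=g(t)$ with $g(0)=g_0$, a covector function which together with $g(t)$ satisfies the Hamiltonian system of Theorem~\ref{hameq1} and the maximum condition~(\ref{max}), and then to show that the speed of $g(t)$ in the metric is constant. I would set $\psi(t):=(\Ad g(t))^{\ast}(\psi_0)$, so that $\psi(t)(w)=\psi_0(\Ad(g(t))(w))$ for $w\in\mathfrak g$. Writing $w(t):=w(g(t))\in U$ for the maximizer and $M(t):=\psi_0(\Ad(g(t))(w(t)))=\max_{w\in U}\psi(t)(w)\ge 0$ for the value of the maximum, the control in the left trivialization is $u(g(t))=M(t)w(t)$, whence $\dot g(t)=dl_{g(t)}(u(g(t)))$ and the speed equals $F(u(g(t)))=M(t)$; in particular $M(0)=F(u(g_0))=|dl_{g_0^{-1}}(v(g_0))|>0$. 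Since the integral curve is Lipschitz (Theorem~\ref{main}), the maps $g(t)$, $\psi(t)$ and $M(t)$ are absolutely continuous. Applying Theorem~\ref{dad} with $g_0=g(t)$ and $w=\dot g(t)=dl_{g(t)}(u(g(t)))$ gives, at almost every $t$,
\begin{equation}
\label{adeq}
\psi(t)(v)^{\prime}=\psi(t)([u(g(t)),v])=M(t)\,\psi(t)([w(t),v]),\qquad v\in\mathfrak g,
\end{equation}
which is exactly the adjoint equation~(\ref{hame}).

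The central step is to prove that $M(t)$ is constant, which is what makes $g(t)$ proportional to arc length. Here $M(t)=\max_{w\in U}\psi(t)(w)$ is the support function of the compact set $U$ evaluated at $\psi(t)$; being the composition of a globally Lipschitz convex function with the absolutely continuous map $t\mapsto\psi(t)$, it is differentiable almost everywhere. I would fix a point $t_0$ at which $M$ and $\psi$ are differentiable and (\ref{adeq}) holds. Because $w(t_0)\in U$ is feasible for every $t$, the support-function inequality $\psi(t)(w(t_0))\le M(t)$ together with $\psi(t_0)(w(t_0))=M(t_0)$ yields
\[
M(t_0+h)-M(t_0)\ \ge\ \bigl(\psi(t_0+h)-\psi(t_0)\bigr)(w(t_0))
\]
for all $h$. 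Dividing by $h$ and letting $h\to 0^{+}$ and $h\to 0^{-}$ gives, respectively, $M^{\prime}(t_0)\ge\psi^{\prime}(t_0)(w(t_0))$ and $M^{\prime}(t_0)\le\psi^{\prime}(t_0)(w(t_0))$, whence $M^{\prime}(t_0)=\psi^{\prime}(t_0)(w(t_0))$. By (\ref{adeq}) and skew symmetry of the bracket,
\[
\psi^{\prime}(t_0)(w(t_0))=M(t_0)\,\psi(t_0)([w(t_0),w(t_0)])=0 .
\]
Hence $M^{\prime}=0$ almost everywhere, and since $M$ is absolutely continuous it is constant: $M(t)\equiv M(0)=|dl_{g_0^{-1}}(v(g_0))|=:M_0>0$.

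It remains to identify $g(t)$ as a normal extremal. Since the speed is the constant $M_0$, I would reparametrize by arc length, putting $\tilde g(s):=g(s/M_0)$ and $\tilde\psi(s):=\psi(s/M_0)$. Then $\tilde g$ has control $w(g(s/M_0))\in U$ of norm $1$, so $\dot{\tilde g}(s)=dl_{\tilde g(s)}(w(g(s/M_0)))$ is a trajectory of the control system with control region $U$; the chain rule turns (\ref{adeq}) into the adjoint equation (\ref{hame}) for the pair $(\tilde g,\tilde\psi)$ with this unit control; and by construction $\max_{u\in U}\tilde\psi(s)(u)=M_0$ is attained at the control, so the maximum condition (\ref{max}) holds with constant value $H\equiv M_0>0$. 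Thus $\tilde g$ satisfies PMP for the problem of Theorem~\ref{topt} and is a normal extremal parametrized by arc length, and the original curve $g(t)=\tilde g(M_0 t)$ is this extremal parametrized proportionally to arc length with factor $M_0=|dl_{g_0^{-1}}(v(g_0))|$, as claimed.

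I expect the main obstacle to be the constancy of $M(t)$: the maximizer $w(g)$ need not be unique or continuous when $F$ is neither strictly convex nor smooth, so one cannot differentiate $M(t)=\psi(t)(w(t))$ by a naive product rule. The device above circumvents this by testing the support-function inequality against the single fixed vector $w(t_0)$, which reduces the computation to the pointwise identity $\psi^{\prime}(t_0)(w(t_0))=0$ coming from (\ref{adeq}) and skew symmetry. A minor point to record is that Theorem~\ref{dad}, stated via matrix groups, applies to a general Lie group by the same local-isomorphism argument used for Theorem~\ref{hameq1}, and that all the identities above are to be read almost everywhere along the Lipschitz curve $g(t)$.
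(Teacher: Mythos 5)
Your proof is correct, but it follows a genuinely different route from the paper's. The paper argues by left translation: it sets $\gamma(t)=g_0^{-1}g(t)$, notes that $\Ad(g(t))^{\ast}=\Ad(\gamma(t))^{\ast}\circ\Ad(g_0)^{\ast}$, and concludes that $\gamma$ is an integral curve, with origin $e$, of the vector field built from the transformed covector $\psi_0'=\Ad(g_0)^{\ast}(\psi_0)$; it then invokes point~2 of Theorem~\ref{main} to assert that $\gamma$ (hence its left shift $g=g_0\gamma$) is a normal extremal parametrized proportionally to arc length. That proof is short and makes the equivariance transparent, but it reads Theorem~\ref{main} ``in reverse'': parts 1--2 of that theorem are stated for normal extremals, not for arbitrary integral curves of the field, so the real analytic content --- that the maximized Hamiltonian is constant along an integral curve --- is imported from the machinery behind Theorem~\ref{main} rather than proved on the spot. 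You supply exactly that step explicitly: you build $\psi(t)=(\Ad g(t))^{\ast}(\psi_0)$ along the given curve, derive the adjoint equation from Theorem~\ref{dad}, and prove constancy of $M(t)=\max_{w\in U}\psi(t)(w)$ by the support-function (Danskin-type) one-sided comparison against the fixed maximizer $w(t_0)$, which correctly circumvents the possible non-uniqueness and discontinuity of $w(g)$ for a general norm $F$; the reparametrization step then verifies PMP directly with $H\equiv M_0>0$. Your version is longer but self-contained modulo Theorems~\ref{hameq1} and~\ref{dad} (and your remark that Theorem~\ref{dad} transfers to general Lie groups by the same local-isomorphism argument as Theorem~\ref{hameq1} is the right way to close that gap), whereas the paper's version buys brevity and the structural observation that integral curves through $g_0$ are left shifts of integral curves through $e$ for the covector $\Ad(g_0)^{\ast}(\psi_0)$ --- a fact your argument never needs, since you track the covector along the curve itself.
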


\begin{proof}
	Let $g(t),$ $t\in\mathbb{R},$ be an integral curve under consideration and set $\gamma=\gamma(t)=g_0^{-1}g(t),$ $t\in\mathbb{R}.$ 
	Then $\gamma$ is an integral curve of vector field $dl_{g_0^{-1}}v(g),$ $g\in G,$ with origin $e.$ Hence
	\begin{equation}
	\label{gam}
	\dot{\gamma}(t)=dl_{g_0^{-1}}\dot{g}(t)=dl_{g_0^{-1}}(dl_{g(t)}(u(g(t))))=dl_{\gamma(t)}(u(g(t))).
	\end{equation}
	In addition, 
	\begin{equation}
	\label{adp}
	\Ad(g(t))^{\ast}=\Ad(g_0\cdot \gamma(t))^{\ast}=\Ad(\gamma(t))^{\ast}\circ \Ad(g_0)^{\ast}.
	\end{equation}
	By definition, 
	$$u(g(t))=\Ad(g(t))^{\ast}(\psi_0)(w(g(t)))w(g(t)), $$
	$$\Ad(g(t))^{\ast}(\psi_0)(w(g(t)))=\max_{w\in U}\Ad(g(t))^{\ast}(\psi_0)(w),$$
	that by (\ref{adp}) can be rewrite as
	$$u(g(t))=\Ad(\gamma(t))^{\ast}(\psi_0')(w(g(t)),$$
	$$ \Ad(\gamma(t))^{\ast}(\psi_0')(w(g(t)))=\max_{w\in U}\Ad(\gamma(t))^{\ast}(\psi_0')(w),$$
	where $\psi_0'=\Ad(g_0)^{\ast}(\psi_0).$ 
	As a result of this and (\ref{gam}), we see  that $u(g(t))$ plays a role of $u(\gamma(t))$ for 
	constant covector $\psi_0'$ (instead of $\psi_0$). Due to point 2 of Theorem \ref{main} 
	the curve $\gamma(t)$ is a normal extremal parameterized proportionally to arc length with the proportionality factor $|dl_{g_0^{-1}}(v(g_0))|.$
	Then its left shift $g(t)=g_0\gamma(t)$  also has this property. 
\end{proof}

\begin{remark}
	Theorem \ref{vf} holds for left-invariant Riemannian metrics on (connected) Lie groups. In this case, $v(g_0)\neq 0$ for all 	$g_0\in G.$ 
\end{remark}

Let us choose a basis $\{e_1,\dots, e_n\}$  in $\mathfrak{g},$ assuming that $\{e_1,\dots, e_r\}$ is an 
orthonormal basis for the scalar product $\langle\cdot,\cdot\rangle$ on $\mathfrak{p}$  in case of left-invariant (sub)-Finsler metric. 
Define a scalar product $\langle\cdot,\cdot\rangle$ on $\mathfrak{g},$ considering  $\{e_1,\dots, e_n\}$ as its orthonormal basis.
Then each covector $\psi\in \mathfrak{g}^{\ast}$ can be considered as a vector in $\mathfrak{g},$ setting 
$\psi(v)=\langle \psi,v\rangle$ for every $v\in \mathfrak{g}.$ If $\psi=\sum_{i=1}^{n}\psi_ie_i,$ $v=\sum_{k=1}^nv_ke_k,$ then
$\psi(v)=\psi\cdot v,$ where $\psi$ and $v$ are corresponding vector-row and vector-column, $\cdot$ is the matrix multiplication.
If $l:\mathfrak{g}\rightarrow \mathfrak{g}$  is a linear map, then we denote by  $(l)$ its matrix in the basis
$\{e_1,\dots, e_n\}.$ 

\begin{proposition}
	$$(\Ad g)^{\ast}(\psi)=\psi(\Ad g),\quad g\in G,\,\,\psi\in \mathfrak{g}^{\ast},$$
	where  on the right hand side of the equality $\psi$  indicates the corresponding vector-row.
\end{proposition}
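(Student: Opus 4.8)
The plan is to unwind the definition of the coadjoint action and translate it into the matrix conventions fixed in the paragraph just preceding the proposition. By Theorem \ref{dad}, the coadjoint action is $(\Ad g)^{\ast}(\psi)=\psi\circ\Ad(g)$, so for an arbitrary test vector $v\in\mathfrak{g}$ one has
$$((\Ad g)^{\ast}(\psi))(v)=\psi(\Ad(g)(v)).$$
Since a covector is completely determined by its values on all vectors $v\in\mathfrak{g}$, it suffices to compute this number in coordinates and to recognize it as a row-times-column product.

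First I would write $\Ad(g)(v)$ as a coordinate column: by the definition of $(\Ad g)$ as the matrix of the linear map $\Ad g$ in the basis $\{e_1,\dots,e_n\}$, the coordinate column of $\Ad(g)(v)$ equals $(\Ad g)\cdot v$, where $v$ on the right denotes the coordinate column of $v$. Then, using the identification $\psi(w)=\psi\cdot w$ (vector-row times vector-column) established in the preceding paragraph, I obtain
$$((\Ad g)^{\ast}(\psi))(v)=\psi\cdot((\Ad g)\cdot v)=(\psi\cdot(\Ad g))\cdot v,$$
the last equality being associativity of matrix multiplication. Here $\psi\cdot(\Ad g)$ is the product of the vector-row $\psi$ and the matrix $(\Ad g)$, hence again a vector-row.

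Finally, since the displayed identity holds for every $v\in\mathfrak{g}$, and a covector is determined by its action on all vectors, the covector $(\Ad g)^{\ast}(\psi)$ is represented, under the row-vector identification, by precisely the vector-row $\psi\cdot(\Ad g)=\psi(\Ad g)$, which is the asserted equality.

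The computation carries no genuine obstacle; it is a direct translation of the definition into the matrix conventions together with associativity. The only point requiring care is the bookkeeping of these conventions: that a covector $\psi$ is identified with a vector-row acting on column coordinate vectors by matrix multiplication, and that the linear map $\Ad g$ acts on column vectors by left multiplication by its matrix $(\Ad g)$. With these fixed, regrouping row-times-(matrix-times-column) as (row-times-matrix)-times-column yields the result at once.
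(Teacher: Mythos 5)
Your proof is correct and is essentially the paper's own argument: both unwind the definition $(\Ad g)^{\ast}(\psi)=\psi\circ\Ad(g)$ into the fixed row/column matrix conventions and conclude by associativity of matrix multiplication. The only cosmetic difference is that the paper verifies agreement of the two covectors on vectors of the form $(\Ad g)^{-1}(v)$ rather than on $v$ directly, which is an equivalent reformulation.
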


\begin{proof}
	Obviously, the identity 
	$$(\Ad g)^{\ast}(\psi)((\Ad g)^{-1}(v))=\psi(v)=\psi\cdot v$$
	holds. Therefore, it is enough to verify that for matrix $A:=(\Ad g)$ 
	$$(\psi A)(A^{-1}v)=\psi\cdot v.$$
	But it is obvious.
\end{proof}

If $g(t),$ $t\in \mathbb{R},$ is a normal geodesic of a left-invariant (sub-)Riemannian metric $d$ on a Lie group
$G,$ then $u(g(t))$ is the orthogonal projection onto $\mathfrak{p}$ of the vector $(\Ad g(t))^{\ast}(\psi_0)$ in the 
notation of Theorem \ref{main} for the scalar product $\langle\cdot,\cdot\rangle$ introduced above on  $\mathfrak{g}.$ 
This fact  and formula (\ref{hame}) imply

\begin{theorem}
	\label{kok}
	Every normal parameterized by arc length geodesic of left-invariant (sub-)Riemannian metric on a Lie group $G$ 
	issued from the unit is a solution of the following system of differential equations
	\begin{equation}
	\label{difur}
	\dot{g}(t)=dl_{g(t)}u(t),\,\,u(t)=\sum_{i=1}^r \psi_i(t)e_i,\,\,|u(0)|=1,\,\,
	\dot{\psi}_j(t)=\sum_{k=1}^{n}\sum_{i=1}^{r}c_{ij}^k\psi_i(t)\psi_k(t),
	\end{equation}
    where $ j=1,\dots, n,$ $c_{ij}^k$ are structure constants of Lie algebra $\mathfrak{g}$ in its basis $\{e_1,..., e_n\}.$ 
	In Riemannian case, $r=n$.
\end{theorem}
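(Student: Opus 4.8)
The plan is to read off the system (\ref{difur}) directly from the two structural facts stated just before the theorem: the description of the geodesic control as an orthogonal projection of the coadjoint image of $\psi_0$, together with the Hamiltonian equation (\ref{hame}). Throughout I identify each covector with a vector of $\mathfrak{g}$ via the chosen orthonormal basis $\{e_1,\dots,e_n\}$, so that $\psi(v)=\langle\psi,v\rangle$ and the components are $\psi_j=\psi(e_j)=\langle\psi,e_j\rangle$.

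First I would set $\psi(t):=(\Ad g(t))^{\ast}(\psi_0)$, the covector from part 2 of Theorem \ref{main}, and record the form of the control. By the remark following Theorem \ref{main}, in the (sub-)Riemannian case $u(g)$ is the unique vector of $\mathfrak{p}$ satisfying $\langle u(g),v\rangle=\psi_0(\Ad(g)(v))=\psi(v)=\langle\psi,v\rangle$ for all $v\in\mathfrak{p}$; hence $u(g(t))$ is exactly the orthogonal projection of $\psi(t)$ onto $\mathfrak{p}$. Since $\mathfrak{p}=\Lin\{e_1,\dots,e_r\}$ and the basis is orthonormal, this projection keeps the first $r$ coordinates, giving $u(t)=\sum_{i=1}^{r}\psi_i(t)e_i$, which is the first equation of (\ref{difur}); the equation $\dot g=dl_g u$ is just the control equation (\ref{sg}).

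Next I derive the evolution of the components. Along the geodesic the covector $\psi(t)=(\Ad g(t))^{\ast}(\psi_0)$ satisfies the Hamiltonian equation (\ref{hame}), namely $\psi(v)^{\prime}=\psi([u,v])$ for every $v\in\mathfrak{g}$ (this is Theorem \ref{hameq1} combined with Theorem \ref{dad}). Choosing $v=e_j$ gives $\dot{\psi}_j=\psi([u,e_j])$. Expanding $u=\sum_{i=1}^{r}\psi_i e_i$ and inserting the structure constants $[e_i,e_j]=\sum_{k=1}^{n}c_{ij}^{k}e_k$ yields $\psi([u,e_j])=\sum_{i=1}^{r}\psi_i\sum_{k=1}^{n}c_{ij}^{k}\psi(e_k)=\sum_{k=1}^{n}\sum_{i=1}^{r}c_{ij}^{k}\psi_i\psi_k$, which is precisely the second equation of (\ref{difur}).

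It remains to verify the normalization $|u(0)|=1$, which selects the unit-speed parameterization. By the conservation law (part 2 of Theorem \ref{main}), $\psi(t)(g(t)^{-1}g^{\prime}(t))\equiv1$; but $g(t)^{-1}g^{\prime}(t)=dl_{g(t)^{-1}}\dot g(t)=u(t)$, so $\psi(t)(u(t))=1$. Since $u(t)$ is the orthogonal projection of $\psi(t)$ onto $\mathfrak{p}$, the self-adjointness and idempotence of the projection give $\psi(t)(u(t))=\langle\psi(t),u(t)\rangle=\langle u(t),u(t)\rangle=|u(t)|^2$, whence $|u(t)|\equiv1$ and in particular $|u(0)|=1$. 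In the Riemannian case $\mathfrak{p}=\mathfrak{g}$, so $r=n$. I expect no genuine obstacle here, since the argument only assembles earlier results; the only points demanding care are the bookkeeping of the covector–vector identification and the index placement in the structure constants, both of which are pinned down once the orthonormal basis convention is fixed.
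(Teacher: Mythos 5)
Your proposal is correct and follows exactly the route the paper itself takes: the paper derives Theorem \ref{kok} from the fact that $u(g(t))$ is the orthogonal projection of $(\Ad g(t))^{\ast}(\psi_0)$ onto $\mathfrak{p}$ together with formula (\ref{hame}), which is precisely what you spell out componentwise (your extra verification of $|u(t)|\equiv 1$ via the conservation law is a harmless addition, handled in the paper by the corollary following the theorem).
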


\begin{corollary}
	\begin{equation}
	\label{unit}
	|\dot{g}(t)| = |u(t)|\equiv 1,\quad t\in \mathbb{R}.
	\end{equation}
\end{corollary}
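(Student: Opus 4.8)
The plan is to separate the asserted identity into its two halves: the equality $|\dot g(t)|=|u(t)|$ of the two norms, and the constancy $|u(t)|\equiv 1$. The first half is immediate from the way the left-invariant (sub-)Riemannian metric was defined together with the form of the trajectory in Theorem \ref{kok}. Indeed, by definition $|\dot g(t)|=F\bigl(dl_{g(t)^{-1}}\dot g(t)\bigr)$, and since $\dot g(t)=dl_{g(t)}u(t)$ we get $dl_{g(t)^{-1}}\dot g(t)=u(t)\in\mathfrak p$; in the (sub-)Riemannian case $F(v)=\sqrt{\langle v,v\rangle}$, so $|\dot g(t)|=F(u(t))=|u(t)|$. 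Because $\{e_1,\dots,e_r\}$ is orthonormal for $\langle\cdot,\cdot\rangle$ on $\mathfrak p$ and $u(t)=\sum_{i=1}^r\psi_i(t)e_i$, this reduces everything to proving that $|u(t)|^2=\sum_{i=1}^r\psi_i(t)^2$ is identically $1$.

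For the second half I would treat $\sum_{i=1}^r\psi_i^2$ as a conserved quantity along the flow and verify that its derivative vanishes, then invoke the normalization $|u(0)|=1$ from (\ref{difur}). Differentiating and substituting the equations $\dot\psi_j=\sum_{k=1}^n\sum_{i=1}^r c_{ij}^k\psi_i\psi_k$ gives
\[
\tfrac12\frac{d}{dt}\sum_{j=1}^r\psi_j^2=\sum_{j=1}^r\psi_j\dot\psi_j=\sum_{i,j=1}^r\sum_{k=1}^n c_{ij}^k\,\psi_i\psi_j\psi_k .
\]
Now both indices $i$ and $j$ run over the same range $1,\dots,r$, the product $\psi_i\psi_j$ is symmetric under the exchange $i\leftrightarrow j$, while the structure constants satisfy $c_{ij}^k=-c_{ji}^k$ by skew symmetry of the Lie bracket (recall $[e_i,e_j]=\sum_k c_{ij}^k e_k$). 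Pairing the term $(i,j)$ with the term $(j,i)$ therefore produces a cancellation, so for each fixed $k$ the inner double sum $\sum_{i,j=1}^r c_{ij}^k\psi_i\psi_j$ vanishes, whence the entire expression is zero.

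Consequently $\frac{d}{dt}|u(t)|^2=0$, hence $|u(t)|^2\equiv|u(0)|^2=1$ and $|u(t)|\equiv 1$, which combined with the first half yields $|\dot g(t)|=|u(t)|\equiv 1$. The only delicate point is to apply the antisymmetry to the correct index range: the cancellation works precisely because in the expression for $\dot\psi_j$ the summation index $i$ is restricted to $1,\dots,r$, the same range as the outer index $j$ coming from $u\in\mathfrak p$; the index $k$, which may run up to $n$ and for which $\psi_k$ need not vanish, is only a spectator and does not interfere with the cancellation. I expect this bookkeeping, rather than any substantive estimate, to be the main thing to get right.
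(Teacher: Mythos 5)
Your proof is correct and follows essentially the same route as the paper's: the equality $|\dot g(t)|=|u(t)|$ from left invariance and $\dot g=dl_{g(t)}u(t)$, then conservation of $\sum_{j=1}^r\psi_j^2$ by differentiating, substituting the equations for $\dot\psi_j$, and cancelling via the skew symmetry $c_{ij}^k=-c_{ji}^k$ against the symmetric factor $\psi_i\psi_j$. Your explicit remark that the cancellation works because both $i$ and $j$ range over $1,\dots,r$ while $k$ is a spectator index makes precise the paper's terser appeal to ``skew symmetry with respect to subscripts.''
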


\begin{proof}
	The first equality in (\ref{unit}) is a consequence of the first equality in (\ref{difur}) and left invariance of the scalar product.
	Therefore, due to the equality $|u(0)|=1$, it suffices to prove that $\frac{d}{dt}\langle u(t),u(t)\rangle)=0.$ Now by (\ref{difur}),
	$$\frac{d}{dt}\langle u(t),u(t)\rangle=\left(\sum_{j=1}^r\psi_j(t)\psi_j(t)\right)'=2\sum_{j=1}^r\psi_j(t)\psi_j'(t)=\sum_{k=1}^n\sum_{i,j=1}^rc_{ij}^k\psi_i(t)\psi_j(t)\psi_k(t),$$
	which is zero by the skew symmetry of $c_{ij}^k$ with respect to subscripts.
\end{proof}

\begin{remark}
	In fact, the same equations for $\dot{\psi}_j(t)$ from (\ref{difur}) in a different interpretation were obtained in \cite{GK95} as 
	``normal equations''. Their derivation there uses more complicated concepts and techniques.
\end{remark}

\section{Lie groups with left-invariant Riemannian metrics of constant negative curvature}

The only Lie groups which do not admit left-invariant sub-Finsler metrics are commutative Lie groups and 
Lie groups $G_n,$ $n\geq 2$, consisting of parallel translations and homotheties (without rotations) of Euclidean 
space  $E^{n-1}$ \cite{BerGorb14}, \cite{Ber89}. Up to isomorphisms, Lie groups $G_n$ can be described as 
connected Lie groups every whose left-invariant Riemannian metric has constant negative sectional curvature \cite{Miln76}.

The group $G_n$, $n\geq 2$, is isomorphic to the group of real block matrices 
\begin{equation}
\label{elem}
g=(y,x):=\left(\begin{array}{cc} 
xE_{n-1} & y^{\prime}\\
0 & 1   c
\end{array}\right),
\end{equation}
where $E_{n-1}$ is unit matrix of order $n-1$, $y^{\prime}$ is a transposed 
$(n-1)-$vector--row $y$, 
$0$ is a zero $(n-1)-$vector--row, $x>0$.

It is clear that in vector notation the group operations have a form

\begin{equation}
\label{prod}
(y_1,x_1)\cdot (y_2,x_2)= x_1(y_2,x_2)+(y_1,0),\quad (y,x)^{-1}=x^{-1}(-y,1).
\end{equation}

Let $E_{ij}$, $i,j=1,\dots,n$, be a $(n\times n)$-matrix having 1 in the ith row and the jth column and 0 in all other. Matrices
\begin{equation}
\label{abc}
e_i=E_{in},\,\,i=1,\dots,n-1,\quad e_n=\sum\limits_{k=1}^{n-1}E_{kk}
\end{equation}
constitute a basis of Lie algebra $\frak{g}_n$ of the Lie group $G_n$. In addition,
$$[e_i,e_j]=0,\,\,i,j=1,\dots,n-1;\quad [e_n,e_i]=e_i,\,\,i=1,\dots,n-1$$
so all nonzero structure constants in the basis $\{e_1,\dots,e_n\}$  are equal
\begin{equation}
\label{strconst}
c_{ni}^i=- c_{in}^i=1,\,\quad i=1,\dots,n-1.
\end{equation}

Let  $(\cdot,\cdot)$ be a scalar product  on  $\frak{g}_n$ with the orthonormal basis  $e_1,\dots,e_n$.
Then we get left-invariant Riemannian metric $d$ on the Lie group $G_n$ of constant sectional curvature $-1$ \cite{Miln76}.

On the ground of Theorem \ref{kok} and (\ref{strconst}),  $\psi_i=\psi_i(t)$, $i=1,\dots,n$ are solutions of the  Cauchy problem
\begin{equation}
\label{sist0}
\left\{\begin{array}{c}
\dot{\psi}_i(t)=\psi_i(t)\psi_n(t),\,\,i=1,\dots,n-1,\quad\dot{\psi}_n(t)=-\sum\limits_{i=1}^{n-1}\psi_i^2(t); \\
\psi_i(0)=\varphi_i,\,\,i=1,\dots,n,\quad \sum\limits_{i=1}^n\varphi_i^2=1.
\end{array}\right.
\end{equation}
It follows from (\ref{sist0}) that
$$\ddot{\psi}_n(t)=-2\psi_n(t)\sum\limits_{i=1}^{n-1}\psi_i^2(t)=2\psi_n(t)\dot{\psi}_n(t)=
\left(\psi_n^2\right)^{\cdot}(t),$$
whence on the ground of initial data of the Cauchy problem (\ref{sist0}), it follows that
$$\dot{\psi}_n(t)=\psi_n^2(t)-1,\quad \psi_n(0)=\varphi_n.$$
Solving this Cauchy problem, we find that
$$\psi_n(t)=\frac{\varphi_n\ch t-\sh t}{\ch t-\varphi_n\sh t}.$$
Then on the base of (\ref{sist0}), for $i=1,\dots,n-1$,
$$\ln|\psi_i(t)|=\int\limits_0^t\frac{\varphi_n\ch\tau-\sh\tau}{\ch\tau-\varphi_n\sh\tau}d\tau+\ln{|\varphi_i|}=-\ln{|\ch t -\varphi_n\sh t|}+\ln{|\varphi_i|},\quad \text{if }\varphi_i\neq 0,$$
so
$$\psi_i(t)=\frac{\varphi_i}{\ch t-\varphi_n\sh t},\quad i=1,\dots,n-1,$$
and these formulae are true also when $\varphi_i=0$.

Consequently, on the ground of (\ref{difur}),
\begin{equation}
\label{u}
u(t)=\frac{1}{\ch t-\varphi_n\sh t}\left(\sum\limits_{i=1}^{n-1}\varphi_ie_i+\left(\varphi_n\ch t-\sh t\right)e_n\right).
\end{equation}

If $g\in G_n$ is defined by formula (\ref{elem}), $u=\sum\limits_{i=1}^nu_ie_i\in\frak{g}_n$, then
\begin{equation}
\label{gu}
gu=\left(\begin{array}{cc}
(xu_n)E_{n-1} & v\\
0 & 0
\end{array}\right),\quad v=(xu_1,\dots,xu_{n-1})^T.
\end{equation}

Therefore on the base of Theorem \ref{kok} and (\ref{u}) in the notation  (\ref{elem}), parametrized by arclength normal geodesic
 $g=g(t)$, $t\in\mathbb{R}$,  of the space $(G_n,d)$ with $g(0)=e$ is a solution of the  Cauchy problem
\begin{equation}
\label{coshi}
\left\{\begin{array}{c}
\dot{x}(t)=\frac{\varphi_n\ch t-\sh t}{\ch t-\varphi_n\sh t}x(t),\,\,\dot{y}_i(t)=\frac{\varphi_i}{\ch t-\varphi_n\sh t}x(t),\quad i=1,\dots,n-1, \\
x(0)=1,\quad y_i(0)=0,\,\,i=1,\dots,n-1.
\end{array}\right.
\end{equation}
Solving the problem, we find
\begin{equation}
\label{xy}
x(t)=\frac{1}{\ch t-\varphi_n\sh t},\quad y_i(t)=\int\limits_0^t\frac{\varphi_idt}{(\ch t-\varphi_n\sh t)^2}=\frac{\varphi_i\sh t}{\ch t-\varphi_n\sh t}.
\end{equation}

This implies that
\begin{equation}
\label{expon}
x(t)=e^{\pm t},\quad y_i(t)\equiv 0,\quad i=1,\dots n-1,\quad\mbox{if}\quad \varphi_n = \pm 1.
\end{equation}

Let $\varphi_n^2<1$. Let us show that for any $t\in\mathbb{R}$, the equality 
\begin{equation}
\label{u0}
\sum\limits_{i=1}^{n-1}(y_i(t)-a_i)^2+x^2(t)=\sum\limits_{i=1}^{n-1}a_i^2+1
\end{equation}
holds, where $a_i$, $i=1,\dots,n-1$, are real numbers such that
\begin{equation}
\label{ai}
\sum\limits_{i=1}^{n-1}a_i\varphi_i=\varphi_n.
\end{equation}

We introduce a function $f(t)=\sum\limits_{i=1}^{n-1}(y_i(t)-a_i)^2+x^2(t)$. Due to initial data (\ref{coshi}),  
$f(0)=\sum\limits_{i=1}^{n-1}a_i^2+1$. On the ground of (\ref{coshi}), (\ref{xy}) and last equation in (\ref{sist0}), we get
$$\frac{1}{2}f^{\prime}(t)=\sum\limits_{i=1}^{n-1}(y_i(t)-a_i)\dot{y}_i(t)+x(t)\dot{x}(t)=
\sum\limits_{i=1}^{n-1}\left(\frac{\varphi_i\sh t}{\ch t-\varphi_n\sh t}-a_i\right)\varphi_i+\frac{\varphi_n\ch t-\sh t}{\ch t-\varphi_n\sh t}=$$
$$\frac{\sh t\left(\sum\limits_{i=1}^{n-1}\varphi_i^2-1\right)+\varphi_n\ch t}{\ch t-\varphi_n\sh t}-\sum\limits_{i=1}^{n-1}a_i\varphi_i=\varphi_n-\sum\limits_{i=1}^{n-1}a_i\varphi_i=0.$$
Consequently, $f(t)\equiv f(0)$ and the equality (\ref{u0}) is proved.

It is easy to check that the equality (\ref{ai}) holds for
\begin{equation}
\label{pai}
a_i=\varphi_i\varphi_n/(1-\varphi_n^2),\quad i=1,\dots, n-1;\quad\mbox{moreover}\quad \sum\limits_{i=1}^{n-1}a_i^2+1 = \frac{1}{1-\varphi_n^2}.
\end{equation}
These numbers $a_i$ are obtained as halves of sums of limits $y_i(t)$ when
 $t\rightarrow +\infty$ and $t\rightarrow -\infty$, which are equal to $\varphi_i/(1-\varphi_n)$ and  
  $-\varphi_i/(1+\varphi_n)$ respectively.

Formulae (\ref{prod}) show that the group  $G_n$ is a simply transitive  isometry group of the famous 
Poincare's model  of the Lobachevskii space $L^n$ in the half space  $\mathbb{R}^n_+$ with metric 
$ds^2=(\sum_{k=1}^{n-1}dy_k^2+dx^2)/x^2$.

The above results, including formulae  (\ref{xy}), (\ref{expon}), (\ref{pai}), show that geodesics 
of the space $L^n$ in this model, passing through the point $(0,\dots,0,1),$ are semi-straights or 
semi-circles (with centers $(a_1,\dots,a_{n-1},0)$ and radii $1/{\sqrt{1-\varphi_n^2}}$, (\ref{pai})), orthogonal to the hyperplane
$\mathbb{R}^{n-1}\times \{0\}.$  
Since all other geodesics are obtained by left shifts on the group, in other words, by indicated parallel
translations and homotheties of this model, then also all straights and semi-circles, orthogonal to the hyperplane
 $\mathbb{R}^{n-1}\times \{0\},$ are geodesics of the space $L^n.$

We got a well-known description of geodesics in this  Poincare's model.

\vspace{0.2cm}

Now let us look what the vector field method gives us for the problem.

Every vector $\psi\in\frak{g}_n$ can be considered as a covector $\frak{g}^{\ast}$, setting $\psi(v)=(\psi,v)$ for $v\in\frak{g}_n$.
Then any (co)vector $\psi_0$ from Theorem \ref{main} has a form
$$\psi_0=\sum\limits_{i=1}^n\varphi_ie_i,\quad\sum\limits_{i=1}^n\varphi_i^2=1.$$

Let $w=\sum\limits_{i=1}^nw_ie_i\in\frak{g}_n$, $g\in G_n$ is defined by formula (\ref{elem}). 
It is easy to see that
$$\Ad(g)(w)=gwg^{-1}=\sum\limits_{i=1}^{n-1}(w_ix-w_ny_i)e_i+w_ne_n,$$
$$(\psi_0,\Ad(g)(w))=\sum\limits_{i=1}^{n-1}(w_ix-w_ny_i)\varphi_i+w_n\varphi_n=
x\sum\limits_{i=1}^{n-1}\varphi_iw_i+\left(\varphi_n-\sum\limits_{i=1}^{n-1}\varphi_iy_i\right)w_n.$$
It is clear that
$$u(g)=x\sum\limits_{i=1}^{n-1}\varphi_ie_i+\left(\varphi_n-\sum\limits_{i=1}^{n-1}\varphi_iy_i\right)e_n,$$
$$v(g)=gu(g)=x\sum\limits_{i=1}^{n}u_ie_i=x^2\sum\limits_{i=1}^{n-1}\varphi_ie_i+x\left(\varphi_n-\sum\limits_{i=1}^{n-1}\varphi_iy_i\right)e_n.$$

Thus geodesic $g=g(t)$, $t\in\mathbb{R}$, with $g(0)=e$ is a solution of the Cauchy problem
\begin{equation}
\label{coshi2}
\left\{\begin{array}{c}
\dot{x}(t)=\left(\varphi_n-\sum\limits_{i=1}^{n-1}\varphi_iy_i(t)\right)x(t),\quad\dot{y}_i(t)=\varphi_ix^2(t),\,\,i=1,\dots,n-1, \\
x(0)=1,\quad y_i(0)=0,\,\,i=1,\dots,n-1.
\end{array}\right.
\end{equation}
Dividing the first equation in (\ref{coshi2}) by $x(t),$ we get on the left hand side the derivative of the function $\ln x(t):= z(t).$
Differentiating both sides of the resulting equation and using the second equation in (\ref{coshi2}) and the equality 
$\sum\limits_{i=1}^n\varphi_i^2=1$, we get
$$\ddot{z}(t)=-\sum\limits_{i=1}^{n-1}\varphi_i^2x^2(t)= -(1-\varphi_n^2)e^{2z(t)},\quad z(0)=0,\,\,\dot{z}(0)=\varphi_n.$$

If $\varphi_n=\pm 1$ then $\ddot{z}(t)\equiv 0$ and due to the initial data and the second equation in (\ref{coshi2}), we get $z(t)=\pm t,$ 
$x(t)=e^{\pm t},$ $y_i(t)\equiv 0,$ $i=1,\dots,n-1$.

Let $0\leq \varphi_n^2 < 1.$ Let us multiply both sides of the resulting equation by $2\dot{z}.$ Then
$$2\dot{z}\ddot{z}=-(1-\varphi_n^2)e^{2z}2\dot{z},\quad d(\dot{z})^2=-(1-\varphi_n^2)e^{2z}d{(2z)},\quad \dot{z}^2=-(1-\varphi_n^2)e^{2z}+ C.$$ 
Taking into account  the initial conditions for $z(t),$ we get $C=1$ and $\dot{z}(t)^2=1-(1-\varphi_n^2)e^{2z(t)}.$ The expression on the 
right is positive for $t$ sufficiently close to zero. Therefore, with these  $t,$ we get $$\dot{z}(t)=\pm\sqrt{ 1-(1-\varphi_n^2)e^{2z(t)}},$$
where the sign coincides with the sign of $\varphi_n,$ if $\varphi_n\neq 0.$ Separating variables, we get
$$dt= \frac{\pm dz}{\sqrt{ 1-(1-\varphi_n^2)e^{2z}}}= \frac{\pm dz}{e^z\sqrt{1-\varphi_n^2}\sqrt{(e^{-2z}/(1-\varphi_n^2))-1}}=$$
$$ \frac{\mp d(e^{-z}/\sqrt{1-\varphi_n^2})}{\sqrt{(e^{-2z}/(1-\varphi_n^2))-1}}=
\mp d\left(\arch\left(\frac{e^{-z}}{\sqrt{1-\varphi_n^2}}\right)\right),$$
$$\pm  \arch\left(\frac{e^{-z}}{\sqrt{1-\varphi_n^2}}\right)=c-t, \quad c =  \arch\left(\frac{1}{\sqrt{1-\varphi_n^2}}\right).$$
The applying $\ch$ to the left and right sides of the resulting equality gives
$$\frac{e^{-z(t)}}{\sqrt{1-\varphi_n^2}}=\ch c\ch t- \sh c\sh t= \frac{\ch t -\varphi_n \sh t}{\sqrt{1-\varphi_n^2}}.$$
Consequently, when $ t $ are sufficiently close to zero,
$$x(t)=e^{z(t)}=\frac{1}{\ch t- \varphi_n\sh t}.$$
Since the right sides of the system of differential equations (\ref{coshi2}) are real analytic, 
this equality is true for all $t\in \mathbb{R}.$ 
We obtain from this and the second system in  (\ref{coshi2})  the same solutions $y_i(t),$ $t\in\mathbb{R},$ $i=1,\dots, n-1,$ as in (\ref{xy}).

Using formulae (\ref{prod}) and (\ref{xy}) for $x=x(t)$, $y_i=y_i(t),$ we shall find a formula for distances between group elements, 
or, which is the same, between points of the Lobachevsky space in Poincare's model under consideration.
We obtain from (\ref{xy}) 
$$\frac{1}{x}=\ch t-\varphi_n\sh t,\quad x=\frac{\ch t +\varphi_n\sh t}{\ch^2t-\varphi_n^2\sh^2t}=\frac{\ch t +\varphi_n\sh t}{1 + (1-\varphi_n^2)\sh^2t},$$
$$\sum_{i=1}^{n-1}(y_i/x)^2=\sh^2t\sum_{i=1}^{n-1}\varphi_i^2=(1-\varphi_n^2)\sh^2t,$$
$$\ch t+\varphi_n\sh t=\frac{x}{x^2}\left(x^2+\sum_{i=1}^{n-1}y_i^2\right)=\frac{1}{x}\left(x^2+\sum_{i=1}^{n-1}y_i^2\right),$$
$$\ch t=\frac{1}{2x}\left(1+x^2+\sum_{i=1}^{n-1}y_i^2\right),\quad d((0,1),(y,x))=\arch\left[\frac{1}{2x}\left(1+x^2+\sum_{i=1}^{n-1}y_i^2\right)\right].$$
Now by  (\ref{prod}), the last formula, and left-invariance of metric $d$,
$$(y_1,x_1)^{-1}(y_2,x_2)=x_1^{-1}(-y_1,1)(y_2,x_2)=(x_1^{-1}(y_2-y_1), x_1^{-1}x_2),$$
$$d((y_1,x_1),(y_2,x_2))=d((0,1), (x_1^{-1}(y_2-y_1), x_1^{-1}x_2))=$$
$$\arch\left[\frac{x_1}{2x_2}\left(1+ \frac{x_2^2}{x_1^2}+\frac{1}{x_1^2}\sum_{i=1}^{n-1}(y_{2,i}-y_{1,i})^2\right)\right]=$$
\begin{equation}
\label{dist}
\arch\left[\frac{1}{2x_1x_2}\left(x_1^2+ x_2^2+ \sum_{i=1}^{n-1}(y_{2,i}-y_{1,i})^2\right)\right]= d((y_1,x_1),(y_2,x_2)).
\end{equation}

\section{ The three--dimensional Heisenberg group}

This Heisenberg group is a nilpotent Lie group of upper--triangular matrices
\begin{equation}
\label{heis}
H=\left\{h=\left(\begin{array}{ccc}
1& x & z\\
0& 1 & y \\
0 & 0 & 1
\end{array}\right)\right\},\,\, x,y,z \in \mathbb{R}.
\end{equation}
It is easy to compute that
\begin{equation}
\label{hm}
h^{-1}=\left(\begin{array}{ccc}
1& -x & xy- z\\
0& 1 & -y \\
0 & 0 & 1
\end{array}\right).
\end{equation}
Clearly, $H$ is naturally diffeomorphic to $\mathbb{R}^3$ and $H$ is a connected Lie group with respect to this differential structure.
Matrices 
\begin{equation}
\label{base}
e_1=\left(\begin{array}{ccc}
0& 1 & 0\\
0& 0 & 0 \\
0 & 0 & 0
\end{array}\right),  \quad
e_2=\left(\begin{array}{ccc}
0& 0 & 0\\
0& 0 & 1 \\
0 & 0 & 0
\end{array}\right),\quad
e_3=\left(\begin{array}{ccc}
0& 0 & 1\\
0& 0 & 0 \\
0 & 0 & 0
\end{array}\right)
\end{equation}
constitute a basis of Lie algebra $\mathfrak{h}$ of Heisenberg group  $H$. In addition, 
$$[e_1,e_2]= e_1e_2- e_2e_1= e_3.$$
Hence the vector subspace $\mathfrak{p}\subset \mathfrak{h}$ with basis  $\{e_1,e_2\}$ generates
$\mathfrak{h}.$ 

Thus the triple $(H,\mathfrak{h},\mathfrak{p})$ satisfies all conditions of Theorems \ref{contr} and \ref{topt}.

Let us search for all geodesics of the problem from Theorem \ref{topt}.
They are all normal by Theorem \ref{norm}, and we can use Theorem \ref{main}.

Let us define a scalar product $(\cdot,\cdot)$ on $\mathfrak{h}$ with orthonormal basis $\{e_1,e_2,e_3\}$. 
Then each vector $\psi\in \mathfrak{h}$ can be considered as a covector from $\mathfrak{h}^{\ast},$ if we set
$\psi(v)=(\psi,v)$ for $v\in\mathfrak{h}.$ Then any (co)vector $\psi_0$ from Theorem \ref{main} has a form
\begin{equation}
\label{psn}
\psi_0= \cos\xi e_1+ \sin\xi e_2+ \beta e_3,\quad \xi, \beta\in \mathbb{R}.
\end{equation}
Let 
$$v=\sum_{k=1}^2v_ke_k=\left(\begin{array}{ccc}
0& v_1 & 0\\
0& 0 & v_2 \\
0 & 0 & 0
\end{array}\right),\quad v\in \mathfrak{p},\,\, v_k\in\mathbb{R},\,\, k=1,2.$$
Using formulae (\ref{heis}), (\ref{hm}), we get
$$Ad(h)(v)=hvh^{-1}= \left(\begin{array}{ccc}
0& v_1 & -yv_1+xv_2\\
0& 0 & v_2 \\
0 & 0 & 0
\end{array}\right),
$$
$$(\psi_0,\Ad(h)(v))=\cos\xi v_1+\sin\xi v_2+ \beta( -yv_1+xv_2)=$$
$$(\cos\xi -\beta y)v_1+ (\sin\xi +\beta x)v_2.$$
It is clear that
$$u(h)=(\cos\xi -\beta y)e_1+ (\sin\xi +\beta x)e_2$$
and so a geodesic is an integral curve of the vector field 
$$v(h)= hu(h)=(\cos\xi -\beta y)e_1+  (\sin\xi +\beta x)e_2+ x (\sin\xi +\beta x)e_3.$$
Therefore $h(t)$ is a solution of the Cauchy problem 
\begin{equation}
\label{sist}
\left\{\begin{array}{l}
\dot{x}=\cos\xi -\beta y, \\
\dot{y}=\sin\xi +\beta x, \\
\dot{z}= x (\sin\xi +\beta x)(=x\dot{y}) \\
\end{array}\right.
\end{equation}
with initial data $x(0)=y(0)=z(0)=0$.

Let us turn to {\it the coordinate system $\tilde{x},\tilde{y},\tilde{z}$  of the first kind} on the Lie group $H:$
$$\exp\left(\begin{array}{ccc}
0& x & z\\
0& 0 & y \\
0 & 0 & 0
\end{array}\right)=\left(\begin{array}{ccc}
1& x & z+(xy)/2\\
0& 1 & y \\
0 & 0 & 1
\end{array}\right).
$$
Hence $\tilde{x}=x, \tilde{y}=y, \tilde{z}=z-(xy)/2.$

It is easy to see that for $\beta=0$ we get
$$x(t)=(\cos\xi)t,\,\, y(t)=(\sin\xi)t,\,\, z(t)=\frac{1}{2}\cos\xi\sin\xi t^2,\,\,\tilde{z}(t)\equiv 0,\,\, t\in\mathbb{R},$$
and geodesic is a $1$--parameter subgroup
$$g(t)=\exp(t(\cos\xi e_1 + \sin\xi e_2)),\,\, t\in\mathbb{R}.$$

If $\beta\neq 0$, the calculations are more difficult:
$$\ddot{x}=-\beta\dot{y}=-\beta(\sin\xi + \beta x)=-\beta^2x-\beta\sin\xi,$$
$$x(t)=C_1\cos\beta t + C_2\sin\beta t -\frac{\sin\xi}{\beta}.$$
Since $x(0)=0,$ $\dot{x}(0)=\cos\xi$, then $C_1=(\sin\xi)/\beta,$ $C_2=(\cos\xi)/\beta,$ 
\begin{equation}
\label{xt}
x(t)=\frac{1}{\beta}(\sin\xi\cos \beta t + \cos\xi \sin\beta t-\sin\xi)=\frac{1}{\beta}(\sin(\xi+ \beta t)-\sin\xi);
\end{equation}
$$\ddot{y}=\beta\dot{x}=\beta(\cos\xi - \beta y)=-\beta^2y+\beta\cos\xi,$$
$$y(t)=C_1\cos\beta t + C_2\sin\beta t +\frac{\cos\xi}{\beta}.$$
Since $y(0)=0,$ $\dot{y}(0)=\sin\xi$, then $C_1=-(\cos\xi)/\beta,$ $C_2=(\sin\xi)/\beta,$ 
\begin{equation}
\label{yt}
y(t)=\frac{1}{\beta}(-\cos\xi\cos \beta t + \sin\xi\sin\beta t+\cos\xi)=\frac{1}{\beta}(-\cos(\xi+ \beta t)+\cos\xi),
\end{equation}
$$ \tilde{z}'= \dot{z}-\frac{(xy)'}{2}=x\dot{y}-\frac{1}{2}(\dot{x}y+x\dot{y})=\frac{1}{2}(x\dot{y}-\dot{x}y)=$$
$$\frac{1}{2\beta}[(\sin(\xi+ \beta t)-\sin\xi)\sin(\xi+\beta t)-\cos(\xi+\beta t)(-\cos(\xi+ \beta t)+\cos\xi)]=$$
$$\frac{1}{2\beta}[1-(\sin\xi\cdot\sin(\xi+\beta t)+\cos(\xi+\beta t)\cos\xi)]=\frac{1}{2\beta}(1-\cos\beta t)= \tilde{z}'.$$
Since $\tilde{z}(0)=0$ then
\begin{equation}
\label{zt}
\tilde{z}(t)= \frac{1}{2\beta}\left(t-\frac{\sin\beta t}{\beta}\right), t\in\mathbb{R}.
\end{equation}

It follows from equalities  (\ref{xt}), (\ref{yt}), ($\ref{zt}$) that the projection of geodesic  $g=g(t)$ onto the plane $x,y$ is 
{\it a circle with radius $1/|\beta|$ and center  $(1/\beta)(-\sin\xi,\cos\xi)$,  $T=2\pi/|\beta|$ is a circulation period}, 
while $\tilde{z}(t),$ $t\in\mathbb{R},$ does not depend on the parameter  $\xi.$
Therefore, if we fix  $\beta\neq 0$ then for different  $\xi$ all geodesic segments 
$g(\beta,\xi,t), 0\leq t\leq 2\pi/|\beta|,$ start at $e$ and finish at the same point.
It follows from the existence of the shortest arcs, Theorem \ref{topt}, PMP and our calculations that if $\beta=0$
(respectively, $\beta\neq 0$) then every segment (respectively,  the length of such segment is less or equal to 
$T=2\pi/|\beta|$) of these geodesics is a shortest arc. There is no other geodesic or shortest arc except 
indicated above and their left shifts.

\section{Controls for left-invariant sub-Riemannian  metrics on $SO(3)$}
\label{so3}

It is well known that every two--dimensional vector subspace $\mathfrak{p}$ of Lie algebra $(\mathfrak{so}(3),[\cdot,\cdot])$ 
of the Lie group $SO(3)$ generates $\mathfrak{so}(3).$ Moreover, there exists a basis $\{e_1,e_2\}$ of the space  
$\mathfrak{p}$ such that  $[e_2,e_3]=e_1,$ $[e_3,e_1]=e_2$ for the vector 
$e_3=[e_1,e_2]$. Let $(\cdot,\cdot)$ be a scalar product on $\mathfrak{so}(3)$ with orthonormal basis
 $\{e_1,e_2,e_3\}.$ Then if a scalar product $\langle\cdot,\cdot\rangle$
on $\mathfrak{p}$ defines a left-invariant sub-Riemannian metric $d$ on the Lie group $G=SO(3),$ then there exists
a basis $\{v,w\}$ in  $\mathfrak{p}$  that is orthonormal relative to $\langle\cdot,\cdot\rangle,$ orthogonal relative to 
$(\cdot,\cdot),$ and such that $(v,v)=a^2\leq b^2=(w,w),$ $[v,w]=(ab)e_3,$ where $0< a\leq b.$ Let $v,w$ be new vectors $e_1,e_2.$ Then
\begin{equation}
\label{e1e2e3}
[e_1,e_2]=(ab)e_3,\,\,[e_3,e_1]=(b/a)e_2,\,\,[e_2,e_3]=(a/b)e_1,\,\, 0< a\leq b.
\end{equation}

It follows from (\ref{e1e2e3}) that all nonzero structure constants are 
$$c_{12}^3=- c_{21}^3= ab,\,\, c_{31}^2=- c_{13}^2=b/a,\,\,c_{23}^1=-c_{32}^1= a/b.$$

Let $g(t)$, $t\in\mathbb{R}$, be a  geodesic of the space $(SO(3),d)$,  parametrized by arclength, and $g(0)=e$. 
On the ground of Theorem \ref{kok},
$$g^{\prime}(t)=g(t)u(t),\quad u(t)=\psi_1(t)e_1+\psi_2(t)e_2,$$
where
\begin{equation}
\label{psi}
\psi^{\prime}_1(t)=-ab\psi_2(t)\psi_3(t),\quad \psi^{\prime}_2(t)=ab\psi_1(t)\psi_3(t),\quad
\psi^{\prime}_3(t)=\frac{a^2-b^2}{ab}\psi_1(t)\psi_2(t).
\end{equation}
Since $|u(t)|\equiv 1$ then $\psi_1(t)=\cos\xi(t)$, $\psi_2(t)=\sin\xi(t)$ and (\ref{psi}) is written as
$$-\sin\xi(t)\dot{\xi}(t)=-ab\sin\xi(t)\psi_3(t), \quad \cos\xi(t)\dot{\xi}(t)=ab\cos\xi(t)\psi_3(t),$$
$$ \psi'_3(t)=\frac{a^2-b^2}{ab}\cos\xi(t)\sin\xi(t).$$
Then $\psi_3(t)=\frac{1}{ab}\xi^{\prime}(t)$ and $\xi=\xi(t)$ is a solution of the differential equation
\begin{equation}
\label{xid}
\xi^{\prime\prime}(t)=\frac{a^2-b^2}{2}\sin 2\xi(t).
\end{equation}

If $a=b$ then $\xi''(t)=0,$ $\xi'(t)=\const=\beta.$ Then geodesics are obtained from geodesics in the case of $a=b=1$ with 
the change the parameter $s$  by the parameter $t=s/a.$ Geodesics, shortest arcs, the distance $d,$ the cut locus and conjugate 
sets for geodesics in the case of $a=b=1$ are found in papers \cite{BZ15} and  \cite{BZ151}.

The case  $0<a<b$  is reduced to the case $a^2-b^2=-1$ by proportional change of the metric $d$.
Then the variable $\omega(t):=2\xi(t)$ allows us to rewrite the equation as the mathematical pendulum equation
\begin{equation}
\label{omega}
\omega^{\prime\prime}(t)=-\sin\omega(t).
\end{equation} 

In \cite{BS16},  I.Yu.~Beschastnyi and Yu.L.~Sachkov  studied geodesics of left-invariant sub-Riemannian metrics on the Lie group 
$SO(3)$ and gave estimates for the cut time and the metric diameter. Under replacement  $b^2-a^2$ by  $a^2$ and $\xi$ by $\psi,$ 
the equation (\ref{xid}) coincides with the equation (2.4) from their paper, obtained by another method.

\section{To search for geodesics of a sub-Riemannian  metric on $SH(2)$}

The Lie group $SH(2)$ consists of all matrices of a form 
\begin{equation}
\label{matr3}
g=\left(\begin{array}{cc}
A& v\\
0& 1\end{array}\right);\quad A=\left(\begin{array}{cc}
\ch\varphi & \sh\varphi\\
\sh\varphi & \ch\varphi 
\end{array}\right),\quad v=\left(\begin{array}{c}
x\\
y
\end{array}\right) \in \mathbb{R}^2. 
\end{equation}
It is not difficult to see that
\begin{equation}
\label{inv}
g^{-1}=
\left(\begin{array}{cc}
A & v\\
0& 1
\end{array}\right)^{-1}=\left(\begin{array}{cc}
A^{-1}& -A^{-1}v\\
0& 1
\end{array}\right).
\end{equation}

Clearly, matrices
\begin{equation}
\label{abc}
e_1=\left(\begin{array}{ccc}
0& 1 & 0\\
1& 0 & 0 \\
0 & 0 & 0
\end{array}\right),\quad
e_2=\left(\begin{array}{ccc}
0& 0 & 1\\
0& 0 & 0 \\
0 & 0 & 0
\end{array}\right),\quad
e_3=\left(\begin{array}{ccc}
0& 0 & 0\\
0& 0 & 1 \\
0 & 0 & 0
\end{array}\right)
\end{equation}
constitute a basis of Lie algebra $\mathfrak{sh}(2).$ In addition, 
\begin{equation}
\label{abca}
[e_1,e_2]=e_3,\quad [e_2,e_3]=0,\quad [e_1,e_3]=e_2.
\end{equation}

Let us define a scalar product $\langle\cdot,\cdot\rangle$ on $\mathfrak{sh}(2)$  with orthonormal basis
$\{e_1,\,e_2,\,e_3\}$ and the subspace $\mathfrak{p}$ with orthonormal basis $\{e_1,\,e_2\}$ 
generating Lie algebra $\mathfrak{sh}(2)$.
Thus a left-invariant sub-Riemannian metric  $d$ is defined on the Lie group  $SH(2).$

Let us take a (co)vector $\psi_0= \cos\alpha e_1+ \sin\alpha e_2+\beta e_3\in\frak{sh}(2)$. We calculate
$$\psi_g(w)=\langle\psi_g,w\rangle=\langle\psi_0,gwg^{-1}\rangle\quad g\in SH(2),\,\,w= w_1e_1+w_2e_2\in\mathfrak{p}.$$
$$gwg^{-1}=\tiny{\left(\begin{array}{ccc}
	\ch\varphi & \sh\varphi & x\\
	\sh\varphi & \ch\varphi & y \\
	0 & 0 & 1
	\end{array}\right)
	\left(\begin{array}{ccc}
	0 & w_1 & w_2 \\
	w_1 & 0 & 0 \\
	0 & 0 & 0
	\end{array}\right)
	\left(\begin{array}{ccc}
	\ch\varphi & -\sh\varphi & -x\ch\varphi+y\sh\varphi\\
	-\sh\varphi & \ch\varphi & x\sh\varphi-y\ch\varphi \\
	0 & 0 & 1
	\end{array}\right)}$$
$$=w_1e_1+(-w_1y+w_2\ch\varphi)e_2+ (-w_1x+w_2\sh\varphi)e_3,$$
$$\psi_g(v)=w_1\cos\alpha+(-w_1y+w_2\ch\varphi)\sin\alpha+(-w_1x+ w_2\sh\varphi)\beta=$$
$$w_1(\cos \alpha - e\sin\alpha-\beta x)+ w_2(\ch\varphi\sin\alpha + \beta\sh\varphi).$$
Therefore,
$$u(g)=(\cos\alpha-y\sin\alpha-\beta x)e_1+(\sin\alpha\ch\varphi+\beta\sh\varphi)e_2,$$
$$v(g)=gu(g)=\tiny{\left(\begin{array}{ccc}
	\ch\varphi & \sh\varphi & x\\
	\sh\varphi & \ch\varphi & y \\
	0 & 0 & 1
	\end{array}\right)\left(\begin{array}{ccc}
	0 & \cos\alpha-y\sin\alpha-\beta x & \sin\alpha\ch\varphi+\beta\sh\varphi\\
	\cos\alpha-y\sin\alpha-\beta x & 0 & 0 \\
	0 & 0 & 0
	\end{array}\right)}$$
$$=\tiny{\left(\begin{array}{ccc}
	\sh\varphi(\cos\alpha-y\sin\alpha-\beta x) & \ch\varphi(\cos\alpha-y\sin\alpha-\beta x) & \ch\varphi(\sin\alpha\ch\varphi+\beta\sh\varphi) \\
	\ch\varphi(\cos\alpha-y\sin\alpha-\beta x) & \sh\varphi(\cos\alpha-y\sin\alpha-\beta x) & 
	\sh\varphi(\sin\alpha\ch\varphi+\beta\sh\varphi) \\
	0 & 0 & 0
	\end{array}\right)}.$$
Hence integral curves of vector field $v(g),$ $g\in SH(2),$ satisfy the system of differential equations
\begin{equation}
\label{sistem}
\left\{\begin{array}{l}
\dot{\varphi}=\cos\alpha-y\sin\alpha-\beta x, \\
\dot{x}=\ch\varphi(\sin\alpha\ch\varphi+\beta\sh\varphi), \\
\dot{y}=\sh\varphi(\sin\alpha\ch\varphi+\beta\sh\varphi). \\
\end{array}\right.
\end{equation}
The geodesic $g(t),$ $t\in\mathbb{R}$, with $g(0)=e$ is a solution of this system with initial data $\varphi(0)=x(0)=y(0)=0$.
In this case, $|u(g(t))|\equiv 1$, i.e.
\begin{equation}
\label{m1}
g(t)\in M_1=\{(\sin\alpha\ch\varphi+\beta\sh\varphi)^2+(\cos\alpha-y\sin\alpha-\beta x)^2=1\}\subset SH(2). 
\end{equation}
Therefore there exists a differentiable function $\gamma=\gamma(t)$ such that
\begin{equation}
\label{s0}
\cos\frac{\gamma}{2}=\sin\alpha\ch\varphi+\beta\sh\varphi,\quad\sin\frac{\gamma}{2}=\cos\alpha-y\sin\alpha-\beta x.
\end{equation}
Since $\varphi(0)=x(0)=y(0)=0,$ then we can assume that $\gamma(0)=\pi-2\alpha$.

On the ground of (\ref{s0}) the sistem (\ref{sistem}) is written in the form
\begin{equation}
\label{sistem1}
\left\{\begin{array}{l}
\dot{\varphi}=\sin{\frac{\gamma}{2}}, \\
\dot{x}=\cos{\frac{\gamma}{2}}\ch\varphi, \\
\dot{y}= \cos{\frac{\gamma}{2}}\sh\varphi.
\end{array}\right.
\end{equation}

Differentiating the first and the second equalities in (\ref{s0}) and using (\ref{sistem1}), we get
$$-\frac{\dot{\gamma}}{2}\sin\frac{\gamma}{2}= (\sin\alpha\sh\varphi+\beta\ch\varphi)\dot{\varphi}=
\sin{\frac{\gamma}{2}}\left(\sin\alpha\sh\varphi+\beta\ch\varphi\right),$$
$$\frac{\dot{\gamma}}{2}\cos\frac{\gamma}{2}=-\dot{y}\sin\alpha-\beta\dot{x}=
-\cos{\frac{\gamma}{2}}\left(\sin\alpha\sh\varphi+\beta\ch\varphi\right),$$
whence
$$\dot{\gamma}=-2(\sin\alpha\sh\varphi+ \beta\ch\varphi),\quad \dot{\gamma}(0)=-2\beta.$$
Consequently, on the ground of the first equality in (\ref{s0}) and (\ref{sistem1})
$$\ddot{\gamma}=-2(\sin\alpha\ch\varphi+\beta\sh\varphi)\dot{\varphi}=-2\cos\frac{\gamma}{2}\sin\frac{\gamma}{2}=-\sin\gamma.$$
We got the  mathematical pendulum equation. In paper \cite{BSB14} this equation together with equations (\ref{sistem1}) are obtained by 
another method replacing $\varphi$ with $z.$  

\section{To search for geodesics of a sub-Riemannian  metric on $SE(2)$}

\label{se2}

The Lie group $SE(2)$ is isomorphic to the group of matrices of a form
\begin{equation}
\label{matr4}
\left(\begin{array}{cc}
A& v\\
0& 1\end{array}\right); \quad A=\left(\begin{array}{cc}
\cos\varphi & -\sin\varphi\\
\sin\varphi & \cos\varphi 
\end{array}\right),\quad v=\left(\begin{array}{c}
x\\
y
\end{array}\right) \in \mathbb{R}^2. 
\end{equation}
The same formula (\ref{inv}) is true.

It is clear that matrices
\begin{equation}
\label{abc}
e_1=\left(\begin{array}{ccc}
0& -1 & 0\\
1& 0 & 0 \\
0 & 0 & 0
\end{array}\right),\quad
e_2=\left(\begin{array}{ccc}
0& 0 & 1\\
0& 0 & 0 \\
0 & 0 & 0
\end{array}\right),\quad
e_3=\left(\begin{array}{ccc}
0& 0 & 0\\
0& 0 & 1 \\
0 & 0 & 0
\end{array}\right)
\end{equation}
constitute a basis of Lie algebra $\mathfrak{se}(2).$ In addition,
\begin{equation}
\label{abca}
[e_1,e_2]=e_3,\quad [e_1,e_3]=-e_2,\quad [e_2,e_3]= 0.
\end{equation}

Let us define a scalar product $\langle\cdot,\cdot\rangle$ on $\mathfrak{se}(2)$  with orthonormal basis
$\{e_1,\,e_2,\,e_3\}$ and the subspace $\mathfrak{p}$ with orthonormal basis $\{e_1,\,e_2\}$ generating 
Lie algebra $\mathfrak{se}(2)$. Thus a left-invariant sub-Riemannian metric  $d$ is defined on the Lie group  $SE(2)$
(see \cite{Ber941}--\cite{S10} and other papers).  

Let us take a (co)vector $\psi_0= \cos\alpha e_1+ \sin\alpha e_2+\beta e_3\in\frak{se}(2)$.
We calculate 
$$\psi_g(w)=\langle\psi_g,w\rangle=\langle\psi_0,gwg^{-1}\rangle\quad g\in SH(2),\,\,w= w_1e_1+w_2e_2\in\mathfrak{p}.$$
$$gwg^{-1}=\tiny{\left(\begin{array}{ccc}
	\cos\varphi & -\sin\varphi & x\\
	\sin\varphi & \cos\varphi & y \\
	0 & 0 & 1
	\end{array}\right)
	\left(\begin{array}{ccc}
	0 & -w_1 & w_2 \\
	w_1 & 0 & 0 \\
	0 & 0 & 0
	\end{array}\right)
	\left(\begin{array}{ccc}
	\cos\varphi & \sin\varphi & -x\cos\varphi-y\sin\varphi\\
	-\sin\varphi & \cos\varphi & x\sin\varphi-y\cos\varphi \\
	0 & 0 & 1
	\end{array}\right)}$$
$$=w_1e_1+(w_1y+ w_2\cos\varphi)e_2+(-w_1x+ w_2\sin\varphi)e_3,$$
$$\psi_g(w)=w_1\cos\alpha+(w_1y+w_2\cos\varphi)\sin\alpha+(-w_1x+ w_2\sin\varphi)\beta=$$
$$w_1(\cos\alpha+  y\sin\alpha- \beta x) + w_2(\sin\alpha\cos\varphi + \beta\sin\varphi).$$
Consequently,
$$u(g)=(\cos\alpha + y\sin\alpha-\beta x)e_1+(\sin\alpha\cos\varphi+\beta\sin\varphi)e_2,$$
$$v(g)=gu(g)=\tiny{\left(\begin{array}{ccc}
	\cos\varphi & -\sin\varphi & x\\
	\sin\varphi & \cos\varphi & y \\
	0 & 0 & 1
	\end{array}\right)\left(\begin{array}{ccc}
	0 & -\cos\alpha-y\sin\alpha+\beta x & \sin\alpha\cos\varphi+\beta\sin\varphi\\
	\cos\alpha+y\sin\alpha-\beta x & 0 & 0 \\
	0 & 0 & 0
	\end{array}\right)}$$
$$=\tiny{\left(\begin{array}{ccc}
	\sin\varphi(\beta x-\cos\alpha-y\sin\alpha) & \cos\varphi(\beta x-\cos\alpha-y\sin\alpha) & \cos\varphi(\sin\alpha\cos\varphi+\beta\sin\varphi) \\
	\cos\varphi(\cos\alpha+y\sin\alpha-\beta x) & \sin\varphi(\beta x-\cos\alpha-y\sin\alpha) & 
	\sin\varphi(\sin\alpha\cos\varphi+\beta\sin\varphi) \\
	0 & 0 & 0
	\end{array}\right)}.$$
Hence integral curves of vector field $v(g),$ $g\in SE(2),$ satisfy the system of differential equations
\begin{equation}
\label{sisteme}
\left\{\begin{array}{l}
\dot{\varphi}=\cos\alpha+y\sin\alpha-\beta x, \\
\dot{x}=\cos\varphi(\sin\alpha\cos\varphi+\beta\sin\varphi), \\
\dot{y}=\sin\varphi(\sin\alpha\cos\varphi+\beta\sin\varphi) \\
\end{array}\right.
\end{equation}
The geodesic $g(t),$ $t\in\mathbb{R}$, with $g(0)=e$ is a solution of this system with initial data $\varphi(0)=x(0)=y(0)=0$. In this case, $|u(g(t))|\equiv 1$, i.e.
\begin{equation}
\label{m1}
g(t)\in M_1=\{(\sin\alpha\cos\varphi+\beta\sin\varphi)^2+(\cos\alpha+y\sin\alpha-\beta x)^2=1\}\subset SE(2).
\end{equation}
Therefore there exist differentiable functions $\omega=\omega(t)=2\xi(t)$ such that
\begin{equation}
\label{s00}
\sin\frac{\omega(t)}{2}=\sin\alpha\cos\varphi+\beta\sin\varphi,\quad\cos\frac{\omega(t)}{2}=\cos\alpha+y\sin\alpha-\beta x.
\end{equation}
Given the equality $\varphi(0)=x(0)=y(0)=0$, we can assume that $\omega(0)=2\xi(0)=2\alpha$.

On the ground of formula (\ref{s00}) the system (\ref{sisteme}) is written in a form
\begin{equation}
\label{sisteme1}
\left\{\begin{array}{l}
\dot{\varphi}=\cos{\frac{\omega}{2}}, \\
\dot{x}=\sin{\frac{\omega}{2}}\cos\varphi, \\
\dot{y}= \sin{\frac{\omega}{2}}\sin\varphi.
\end{array}\right.
\end{equation}

Differentiating the first and the second equalities in (\ref{s00}) and using (\ref{sisteme1}), we get
$$\frac{\dot{\omega}}{2}\cos\frac{\omega}{2}=-\left(\sin\alpha\sin\varphi-\beta\cos\varphi\right)\dot{\varphi}= 
-\cos\frac{\omega}{2}\left(\sin\alpha\sin\varphi-\beta\cos\varphi\right),$$
$$-\frac{\dot{\omega}}{2}\sin\frac{\omega}{2}=\dot{y}\sin\alpha-\beta\dot{x}=
\sin{\frac{\omega}{2}}\left(\sin\alpha\sin\varphi-\beta\cos\varphi\right),$$
whence
\begin{equation}
\label{init}
\dot{\omega}=2(\beta\cos\varphi-\sin\xi\sin\varphi),\quad \dot{\omega}(0)=2\dot{\xi}(0)=2\beta.
\end{equation}
Differentiating the last equality, we get in view of formulae (\ref{s00}) and (\ref{sisteme1})
\begin{equation}
\label{pendulum}
\ddot{\omega}=-2(\beta\sin\varphi+\sin\alpha\cos\varphi)\dot{\varphi}=-2\sin\frac{\omega}{2}\cos\frac{\omega}{2}=
-\sin\omega.
\end{equation}

We get again  the mathematical pendulum equation.

\end{document}